\documentclass[10pt]{amsart}
\def\cal#1{\mathcal{#1}}


          \usepackage{amssymb}
          \usepackage{amsmath}
          \usepackage{amsthm}
          \usepackage{enumerate}
         \usepackage{url}
          \usepackage{color}



\usepackage[numbers,sort&compress]{natbib}
\def\cal{\mathcal}
\newcommand{\comment}[1]{}
\newcommand{\ind}{{\bf 1}}
\def\indd#1{{\ind}_{\{#1\}}}

\def\indn#1{\{#1_n\}_{n\in\N}}
\newcommand{\proba}{\mathbb P}
\newcommand{\esp}{{\mathbb E}}

\newcommand{\defe}{\mathrel{\mathop:}=}
\newcommand{\inv}{^{-1}}
\newcommand{\cov}{{\rm{Cov}}}

\newcommand{\calB}{{\cal B}}

\newcommand{\calD}{{\cal D}}

\def\B{{\mathbb B}}

\def\G{{\mathbb G}}

\def\Z{{\mathbb Z}}


\newcommand{\eqnh}{\begin{eqnarray*}}
\newcommand{\eqne}{\end{eqnarray*}}
\newcommand{\eqnhn}{\begin{eqnarray}}
\newcommand{\eqnen}{\end{eqnarray}}
\newcommand{\equh}{\begin{equation}}
\newcommand{\eque}{\end{equation}}

\def\summ#1#2#3{\sum_{#1 = #2}^{#3}}
\def\prodd#1#2#3{\prod_{#1 = #2}^{#3}}
\def\sif#1#2{\sum_{#1=#2}^\infty}

\newcommand{\eqd}{\stackrel{\rm d}{=}}

\def\topp#1{^{(#1)}}

\def\nn#1{{\left\|#1\right\|}}

\def\abs#1{\left|#1\right|}

\def\ccbb#1{\left\{#1\right\}}

\def\sccbb#1{\{#1\}}

\def\spp#1{(#1)}
\def\pp#1{\left(#1\right)} 
 
\def\bb#1{\left[#1\right]}

\def\mmid{\;\middle\vert\;}

\def\floor#1{\left\lfloor #1 \right\rfloor}
\def\sfloor#1{\lfloor #1 \rfloor}

\def\vv#1{{\boldsymbol #1}}


\def\qmand{\quad\mbox{ and }\quad}

\def\qmwith{\quad\mbox{ with }\quad}
\def\mfa{\mbox{ for all }}
\def\qmfa{\quad\mbox{ for all }\quad}
\def\mmas{\mbox{ as }}

\def\wt#1{\widetilde{#1}}

\def\what#1{\widehat{#1}}


\def\weakto{\Rightarrow}

\def\limn{\lim_{n\to\infty}}

\def\limsupn{\limsup_{n\to\infty}}

\def\Z{{\mathbb Z}}

\def\R{{\mathbb R}}

\def\N{{\mathbb N}}


\usepackage{epsfig}
\usepackage{ifthen}



\newtheorem{Thm}{Theorem}[section]
\newtheorem{Lem}[Thm]{Lemma}
\newtheorem{Prop}[Thm]{Proposition}

\theoremstyle{definition}
\newtheorem{Rem}[Thm]{Remark}

\numberwithin{equation}{section}



\usepackage{color,soul}
\renewcommand{\comment}[1]{{\color{blue}\fbox{#1}}}

\newcommand{\longcommenthide}[1]{}


\title{Extremes of $q$-Ornstein--Uhlenbeck processes}
\author{Yizao Wang}
\address
{
Yizao Wang\\
Department of Mathematical Sciences\\
University of Cincinnati\\
2815 Commons Way\\
Cincinnati, OH, 45221-0025, USA.
}
\email{yizao.wang@uc.edu}
\begin{document}\sloppy

\date{\today}

\keywords{Markov process, self-similar process, tangent process, excursion probability, double-sum method, Brown--Resnick process, semi-min-stable process, $q$-Ornstein--Uhlenbeck process}
\subjclass[2010]{Primary, 60G70
; Secondary, 60J25
}

\begin{abstract}
Two limit theorems are established on the extremes of  a family of stationary Markov processes, known as $q$-Ornstein--Uhlenbeck processes with $q\in(-1,1)$.  Both results are crucially based on the weak convergence of the tangent process at the lower boundary of the domain of the process,  a positive self-similar Markov process little investigated so far in the literature. The first result is the asymptotic excursion probability established by the double-sum method, with an explicit formula for the Pickands constant in this context. The second result is a Brown--Resnick-type limit theorem on the minimum process of i.i.d.~copies of the $q$-Ornstein--Uhlenbeck process: with appropriate scalings in both time and magnitude, a new semi-min-stable process arises in the limit. \end{abstract}
\maketitle
\section{Introduction}
In this paper, we continue our investigation 
 on the path properties of  $q$-Ornstein--Uhlenbeck processes ($q\in(-1,1)$) in \citep{bryc16local,wang16large}. These are stationary Markov processes with explicit  transition probability density functions. This family of processes have two origins. On one hand, as Markov processes they, more precisely certain transformations of them called $q$-Brownian motions, arise as a special case of the  quadratic harnesses introduced in \citep{bryc05conditional,bryc07quadratic}. In short, quadratic harnesses are centered and square-integrable  stochastic processes $\{X_t\}_{t\in[0,\infty)}$ such that $\esp(X_sX_t) = \min(s,t)$ and for $0\le r<s<t$ given the past $\{X_u\}_{u\le r}$ and future $\{X_u\}_{u\ge t}$, the conditional mean and variances of $X_s$ are in linear and quadratic forms of $(X_r, X_t)$, respectively. On the other hand, the $q$-Ornstein--Uhlenbeck processes and $q$-Brownian motions arise for the first time in non-commutative probability with the same name, and it is known since the seminal results of~\citet{biane98processes} and \citet{bozejko97qGaussian} that every non-commutative Markov process has a classical Markov process counterpart that we investigate here. In this paper, we shall focus on $q$-Ornstein--Uhlenbeck processes as classical Markov processes. No knowledge of non-commutative probability is needed. 

As the name tells, the $q$-Ornstein--Uhlenbeck process has an intriguing connection to the well investigated Ornstein--Uhlenbeck Gaussian process: as $q\uparrow1$, the former converges weakly to the latter. This makes one wonder to what extend the two processes are similar. For each $q$ fixed, however, we have seen that in terms of path properties, the $q$-Ornstein--Uhlenbeck processes are qualitatively different \citep{bryc16local,wang16large}. For example, each $q$-Ornstein--Uhlenbeck process has bounded state space $[-2/\sqrt{1-q},2/\sqrt{1-q}]$, while the Ornstein--Uhlenbeck process takes values in $\R$. Moreover, we know now that the $q$-Ornstein--Uhlenbeck processes have jumps, and more precisely  for all $q\in(-1,1)$ they behave locally as Cauchy processes: this is established via the framework of tangent processes \citep{falconer03local} in \citep{bryc16local}; at the same time, the Ornstein--Uhlenbeck process has continuous sample paths. 

In this paper, we study the extremes of $q$-Ornstein--Uhlenbeck processes, as a continuation of our previous investigations.   Extreme value theory for stochastic processes has been extensively developed. There are already excellent monographs on both the general theory~\citep{leadbetter83extremes,piterbarg96asymptotic,resnick87extreme,dehaan06extreme} and concrete examples from a broad range of applications  \citep{aldous89probability}, just to mention a few. Here, we address two important problems on the extremes of continuous-time stochastic processes, in the example of $q$-Ornstein--Uhlenbeck processes. The first is to establish the asymptotic excursion probability of the  process over a fixed interval. The second is to establish the weak convergence of the maximum process of i.i.d.~copies of the same process. 

There exists already a huge literature on the two problems for Gaussian processes, and we take the same strategies to investigate the $q$-Ornstein--Uhlenbeck processes. 
What makes the analysis of extremes in this case special, however, is the tangent process of $q$-Ornstein--Uhlenbeck processes at the boundary of the domain, a positive self-similar Markov process that has not been much investigated so far in the literature. As a consequence, a new stationary process arises in the answer to the second question. 
Below, we first review related results for stationary Gaussian processes, to shed light on the techniques to be applied to the $q$-Ornstein-Uhlenbeck processes, and particularly on  the crucial role of the tangent process when answering both questions.
\subsection{Review of extremes of Gaussian processes}

Let $\G\topp\alpha\equiv \{\G\topp\alpha_t\}_{t\in\R}$ be a stationary centered Gaussian process, and assume 
\equh\label{eq:cov}
\cov(\G\topp\alpha_0,\G\topp\alpha_t) = 1-C|t|^\alpha + o(|t|^\alpha) \mbox{ for some $\alpha\in(0,2], C>0$ as $t\to 0$}.
\eque
We assume stationarity for the sake of simplicity. Many results are known for $\G\topp\alpha$ being non-stationary. Two questions of our interest here are the following.
\medskip

\noindent {(i) \em Asymptotic excursion probability of stationary Gaussian processes.} 
In a seminal work \citet{pickands69asymptotic} showed that
\equh\label{eq:excursion}
\proba\pp{\sup_{t\in[0,L]}\G\topp\alpha_t>u}\sim LC^{1/\alpha}H_\alpha\frac {1}{\sqrt {2\pi}}u^{2/\alpha-1}e^{-u^2/2}
\eque
as $u\to\infty$ where $H_\alpha$ is the so-called Pickands constant. The Pickands constant can be expressed by
\[
H_\alpha:=\lim_{T\to\infty}\frac{H_\alpha(T)}T \qmwith H_\alpha(T) = \int_0^\infty e^s\proba\pp{\sup_{t\in[0,T]}\pp{\sqrt 2\,\B^{\alpha/2}_t - t^\alpha}>s}ds,
\]
where $\B^{\alpha/2}$ denotes a standard fractional Brownian motion with Hurst index $\alpha/2\in(0,1]$, that is, a centered Gaussian process with 
\[
\cov(\B^{\alpha/2}_s,\B^{\alpha/2}_t) = \frac12\pp{s^\alpha+t^\alpha-|s-t|^\alpha}, s,t\ge 0.
\] 
\medskip

\noindent{(ii) \em Maximum process of i.i.d.~stationary Gaussian processes.} Let $\G\topp\alpha_i\equiv\{\G\topp\alpha_{i,t}\}_{t\in\R}$ be i.i.d.~copies of $\G\topp\alpha$. \citet{brown77extreme} and \citet{kabluchko09stationary} showed that, for 
\[
b_n := \sqrt{2\log n} - \frac{\frac12\log\log n + \log(2\sqrt\pi)}{\sqrt{2\log n}} \qmand  a_n := \frac1{C^{1/\alpha}b_n^{2/\alpha}},
\]
\equh\label{eq:BR}
b_n\ccbb{{\max_{i=1,\dots,n}\G\topp\alpha_{i,a_nt}-b_n}}_{t\in[0,\infty)} \weakto \ccbb{\zeta\topp\alpha_t}_{t\in[0,\infty)}
\eque
in $D([0,\infty))$, 
where $\{\zeta\topp\alpha_t\}_{\in[0,\infty)}$ is a {\em max-stable process} \citep{dehaan84spectral,kabluchko09spectral,gine90max,stoev06extremal,kabluchko09extremes}, now known as the Brown--Resnick process.
Here and in the sequel, we let ``$\weakto$'' denote weak convergence of probability measures \citep{ethier86markov,billingsley99convergence}.

One way to represent the limiting process $\zeta\topp\alpha$ is the following. Let $\indn U$ be enumerations of points from a Poisson point process on $\R$ with intensity measure $e^{-u}du$, and let $\indn{\B^{\alpha/2}}$ be i.i.d.~copies of a standard fractional Brownian motion, independent from $\indn U$. Then one can define $\zeta\topp\alpha$ via
\equh\label{eq:zeta}
\zeta\topp\alpha_t:=\sup_{n\in\N}\pp{U_n+\sqrt 2\,\B^{\alpha/2}_{n,t}-t^\alpha}, t\in[0,\infty).
\eque 
\citet{brown77extreme} actually worked out first for the case where $\G$ is an Ornstein--Uhlenbeck process, while the results by \citet{kabluchko09stationary} allow more general assumptions  than~\eqref{eq:cov}. Since then, the Brown--Resnick processes attracted much attention in the community of extreme value theory. For recent developments, see for example \citep{engelke15max,oesting12simulation,engelke11equivalent,engelke15maxima,dieker15exact}. \medskip

Now we comment on the strategies of the proofs of both results mentioned above. 
First, it is not a coincidence that fractional Brownian motions $\B^{\alpha/2}$ show up in both results. 
Indeed, the drifted fractional Brownian motions $\{\sqrt 2\,\B^{\alpha/2}_t-t^\alpha\}_{t\in[0,\infty)}$ arise as the {\em tangent process} of the Gaussian process $\G\topp\alpha$, conditioning on $\G_0\topp\alpha$ being increasingly large, under the assumption~\eqref{eq:cov}. More precisely, consider 
\[
\wt\G_t\topp {\alpha,n}:=b_n\pp{\G_{a_nt}\topp\alpha-b_n}, t\ge 0, n\in\N.
\]
It is an easy exercise to show that, under~\eqref{eq:cov} and under the law $\proba(\cdot\mid \wt\G\topp {\alpha,n}_{0} = w)$,
\equh\label{eq:tangent_G}
\ccbb{\wt \G_{t}\topp {\alpha,n} - w}_{t\in[0,\infty)}\weakto  \ccbb{\sqrt2\, \B^{\alpha/2}_t - {t^\alpha}}_{t\in[0,\infty)},
\eque
as $n\to\infty$, by computing the means and covariances. Note that as $n\to\infty$, $\wt\G\topp{\alpha,n}_0 = w$ says $\G\topp{\alpha}_0 = b_n+w/b_n\to\infty$. Therefore, we refer to the limit in~\eqref{eq:tangent_G} as the {\em tangent process at the boundary}, viewing the infinity as the boundary of the domain of $\G\topp\alpha$. Intuitively, the limiting process $\{\sqrt 2\,\B^{\alpha/2}_t-t^\alpha\}_{t\in[0,\infty)}$ explains the asymptotic behavior of $\G\topp\alpha$ right after achieving a high value: it drops down like a drifted fractional Brownian motion, after  appropriate normalization. 
This weak convergence plays an important role in both results described above. 

Besides,  when establishing~\eqref{eq:excursion}, Pickands developed a simple and yet powerful method, the so-called {\em double-sum method} in \citep{pickands69asymptotic}. This method has turned out to be successful in establishing asymptotic excursion probabilities for stochastic processes and random fields not necessarily stationary or Gaussian. See for example \citep{debicki16extremes,hashorva16extremes,albin90extremal,cheng16mean,cheng16excursion}, just to mention a few. 
When establishing~\eqref{eq:BR}, another useful tool, the {\em convergence of point processes}, is needed. The Poisson point process $\indn U$ in~\eqref{eq:zeta} is the limit of order statistics of $\{\wt G_{i,0}\topp {\alpha,n}\}_{i=1,\dots,n}$.  This tool has also been ubiquitous in the literature of extreme value theory. See for example \citep{lepage81convergence,resnick87extreme}. 

\subsection{Overview of main results}
We establish the counterparts of~\eqref{eq:excursion} and~\eqref{eq:BR} for $q$-Ornstein--Uhlenbeck processes, denoted by $\{X\topp q_t\}_{t\in\R}$ from now on.
Note that as in the Gaussian case, the processes are symmetric and hence working with the maximum/supremum is equivalent to work with minimum/infimum up to some sign changes. {\em Here, equivalently we work with infimum excursion and minimum process in both problems}. The only purpose of this change of convention is for the tangent process to have support on $[0,\infty)$ instead of $(-\infty,0]$. 

Recall that the $q$-Ornstein--Uhlenbeck processes have bounded domain $[b_q^-,b_q^+]$ with $b_q^\pm = \pm 2/\sqrt{1-q}$ for all $q\in(-1,1)$.
We will first show under the law $\proba(\cdot\mid X\topp q_0 = b_q^-+w\epsilon^2), w>0$,
\[
\sqrt{1-q}\cdot\ccbb{\frac{X\topp q_{\epsilon t}-b_q^-}{\epsilon^2}}_{t\in[0,\infty)}\weakto \ccbb{\Z_t^w}_{t\in[0,\infty)}
\]
in $D([0,\infty))$. 
The limit process $\Z^w$  is the tangent process at the lower boundary, and from now on is referred to as the tangent process for short. It is a positive self-similar Markov process, starting at $\Z^w_0 = w$. It is worth mentioning that  $\Z$ has its connection to a process in non-commutative probability via Biane's construction~\citep{biane98processes}, as explained in \citep{bryc16local}, but we do not need this fact here. In the world of classical probability, however, we do not know any other results on $\Z$ besides this limit theorem.
The finite-dimensional convergence was obtained in~\citep{bryc16local}. In Section~\ref{sec:tangent} we establish the tightness, by computing the convergence of semigroups of the corresponding Markov processes and applying a result from \citet[Theorem 4.2.11]{ethier86markov}. 

Once weak convergence to the tangent process is established, the asymptotic excursion probability can be computed by the double-sum method. Theorem~\ref{thm:doublesum} is the  counterpart of~\eqref{eq:excursion}, where we also provide an expression of the Pickands constant in this context. A technical issue is that at a few places we need an estimate of the probability of the type
\[
\proba\pp{\inf_{t\in[0,L]}\Z_t^w<1}
\]
as $w\to\infty$. For this  we need an inequality due to \citet{khoshnevisan97escape,xiao98asymptotic}.

The most interesting result is in Section~\ref{sec:BR}, where we establish another  Brown--Resnick-type limit theorem as~\eqref{eq:BR} in Theorem~\ref{thm:BR}, in the form of
\equh\label{eq:BRZ}
\ccbb{\min_{i=1,\dots,n} \frac{X_{i,a_n t}\topp{q}-b_q^-}{b_n}}_{t\in\R}\weakto\{\eta(t)\}_{t\in\R}
\eque
in $D(\R)$, where $\{X_{i,t}\topp q\}_{t\in\R}, i\in\N$ are i.i.d.~copies of the $q$-Ornstein--Uhlenbeck process.  The limit minimum process $\eta$ can be constructed as
\[
\eta(t) := \inf_{n\in\N}\Z_{n,t}^{W_n}, t\in\R,
\]
where $\{W_n\}_{n\in\N}$ are enumerations of points from  a Poisson point process on $\R_+$ with intensity $(3/2)w^{1/2}dw$, for each $n\in\N$  the process $\Z_n^{W_n}$ is a two-sided version of $\Z$ starting from $W_n$, and $\{\Z_n^{W_n}\}_{n\in\N}$ are conditionally independent given $\{W_n\}_{n\in\N}$ (see Section~\ref{sec:BR} for more details). 
Similarly to the process $\{U_n+\sqrt 2\B^{\alpha/2}_{n,t}-t^\alpha\}_{t\ge 0}$, the presentation in \eqref{eq:zeta}, the process $\{\Z^w_t\}_{t\ge 0}$ drifts to infinity as $t\to\infty$ (see Proposition \ref{prop:transience}). 
 This process provides a new example to the general framework of stationary systems of Markov processes investigated in \citet{brown70property,engelke15max}. 

The limit minimum process provides another rare example of a semi-min-stable process that is not min-stable. Observe that in~\eqref{eq:BR} and~\eqref{eq:BRZ}, the processes are scaled in both magnitude and time. In general when considering minimum of i.i.d.~copies of stochastic processes, it is well known that if the temporal  scaling is not  allowed, all non-degenerate limit processes that can arise are {\em min-stable} processes. If in addition the temporal scaling is allowed, \citet{penrose92semi} provided a characterization of all  possible limit processes under mild assumptions as the class of {\em semi-min-stable} (SMS) processes, which  contains min-stable processes as a subclass. At the same time, SMS processes are also {\em min-infinitely-divisible} (min-i.d.) processes. 
Notable references on min-stable and min-i.d.~processes, or their max counterparts max-stable and max-i.d.~processes, include \citep{dehaan84spectral,dehaan86stationary,gine90max,stoev06extremal,kabluchko09spectral,kabluchko16stochastic,weintraub91sample,stoev08ergodicity,kabluchko10ergodic,dombry13regular}, which provide a general framework to treat such processes by the so-called spectral representations, among other contributions. 
There exists already an extensive literature on such processes.

As for limit theorems as we considered here, however, very few results are known besides the aforementioned Brown--Resnick processes~\eqref{eq:BR} established in~\citep{kabluchko09stationary,brown77extreme}. \citet{engelke15max} established limit theorems for the minimum process of  i.i.d.~Ornstein--Uhlenbeck processes driven by skewed $\alpha$-stable noise, and showed that the limit processes belong to the class of so-called L\'evy--Brown--Resnick processes, a generalization of Brown--Resnick processes introduced in the same paper. However, it was also shown in \citep{engelke15max} that all L\'evy--Brown--Resnick processes are min-stable. The first example of limit minimum processes that are SMS but not min-stable  is due to 
\citet{penrose91minima}, who  examined the minimal distance to the origin of independent Brownian particles, and showed that the limit minimum process  is the infimum of countably infinite Bessel processes with  scalings and shifts following an independent Poisson point process. To the best of our knowledge, the limit minimum process of i.i.d.~$q$-Ornstein--Uhlenbeck processes is the second example of non-min-stable SMS processes that arise in a Brown--Resnick-type limit theorem.

The paper is organized as follows. In Section~\ref{sec:markov} we present preliminary results on $q$-Ornstein-Uhlenbeck processes as Markov processes. In Section~\ref{sec:tangent} we establish the weak convergence to the tangent process $\Z$. In Section~\ref{sec:EVT} we compute the asymptotic excursion probability. In Section~\ref{sec:BR} we describe the limit infimum process and establish the Brown--Resnick-type limit theorem. 
\section{Preliminaries on related Markov processes}\label{sec:markov}
We first recall the definition of $q$-Ornstein--Uhlenbeck processes, denoted by $X\topp q= \{X\topp q_t\}_{t\in\R}$.
The marginal distribution of the $q$-Ornstein--Uhlenbeck process is a symmetric probability measure supported on  $b_q^-\leq x\leq b_q^+$ with
\[
b_q^\pm := \pm \frac2{\sqrt{1-q}}, q\in(-1,1),
\] and has
  probability density function
 \[ 
f\topp q(x) := \frac{\sqrt{1-q}\cdot(q)_\infty}{2\pi}\sqrt{4-(1-q)x^2}\prodd k1\infty\bb{(1+q^k)^2-(1-q)x^2q^k} \indd{|x|\leq \frac2{\sqrt{1-q}}},
\]
where  $(q)_\infty := \prodd k1\infty(1-q^k)$.
This distribution is sometimes called the $q$-normal distribution and appears also  as the orthogonality measure of the   $q$-Hermite polynomials \citep[Section 13.1]{ismail09classical}.
It is known that $X\topp q$ is a stationary Markov process with {\em c\`adl\`ag} trajectories,  with the  transition probability density function $f\topp q_{s,t}(x,y)$
given by
\[
f\topp q_{s,t}(x,y) := (e^{-2(t-s)};q)_\infty \prodd k0\infty\frac1{\varphi_{q,k}(t-s,x,y)} \cdot f\topp q(y)\cdot \indd{|x|\leq \frac2{\sqrt{1-q}}},
\]
with
\[
\varphi_{q,k}(\delta,x,y) := (1-e^{-2\delta}q^{2k})^2 - (1-q)e^{-\delta}q^k(1+e^{-2\delta}q^{2k})xy + (1-q)e^{-2\delta}q^{2k}(x^2+y^2).
\]
Here and below, we write
\[
(a;q)_\infty := \prodd k0\infty(1-aq^k),\mfa  a\in\R, q\in(-1,1).
\]
The above densities can be found at \citep[Corollary 2]{bryc05probabilistic}. See also \citep{szablowski12qWiener} for more background.

Most time we shall work with a transformation of $X\topp q$, namely 
\[
\widetilde X\topp{q,\epsilon}_t \defe \sqrt{1-q}\cdot \frac{X\topp q_{\epsilon t}-b_q^-}{\epsilon^2}.
\]
We let 
 $p\topp{q,\epsilon}_{s,t}$ denote the transition density function of $\wt X\topp{q,\epsilon}$:
 \[
 p\topp{q,\epsilon}_{s,t}(x,y) := f_{\epsilon s,\epsilon t}\topp q\pp{b_q^-+\frac x{\sqrt{1-q}}\epsilon^2, b_q^-+\frac y{\sqrt{1-q}}\epsilon^2}\frac{\epsilon^2}{\sqrt{1-q}}, s,t,x,y>0,
 \]
and $p\topp{q,\epsilon}$ the marginal probability density function \[
p\topp{q,\epsilon}(x) := f\topp q\pp{b_q^-+\frac{x\epsilon^2}{\sqrt{1-q}}}\frac{\epsilon^2}{\sqrt{1-q}}, x\ge 0.
\]

Another Markov process we will work with is the tangent process $\Z$ which takes values in $(0,\infty)$ and has transition density function \equh\label{eq:Biane1/2}
p_{s,t}(x,y) :=
\frac{2 \left(t-s\right)  \sqrt{y}}{\pi
  \bb{(y-x)^2+2 (x+y)(t-s)^2+(t-s)^4}}\indd{x,y>0},s<t.
\eque
The transformed process $\wt X\topp{q,\epsilon}$ is convenient to work with, since we have shown in \citep{bryc16local} that
 \[
 \lim_{\epsilon\downarrow0}p_{s,t}\topp{q,\epsilon}(x,y) = p_{s,t}(x,y) \mfa 0\leq s<t,x,y>0.
 \]
We shall strengthen this result to the convergence of the semigroups in Section~\ref{sec:tangent}, which leads to the weak convergence of $\wt X\topp{q,\epsilon}$ to $\Z^w$ under the law $\proba(\cdot\mid \wt X_0\topp{q,\epsilon}=w)$. 
Throughout, we denote the transition functions of the Markov processes $\wt X\topp{q,\epsilon}$ and $\Z$ respectively by
\[
P\topp{q,\epsilon}_t(x,A) := \int_Ap\topp{q,\epsilon}_{0,t}(x,y)dy \qmand P_t(x,A) := \int_Ap_{0,t}(x,y)dy
\]
for all $t>0, A\in\calB(\R_+)$, and the corresponding semigroups by 
 \equh\label{eq:semigroup_X}
P_t \topp {q,\epsilon}f(x) := \int_0^\infty p\topp{q,\epsilon}_{0,t}(x,y) f(y)dy, t>0, x\in[0,4/\epsilon^2],
 \eque
 and
  \equh\label{eq:semigroup_Z}
P_tf(x):=\int_0^\infty p_{0,t}(x,y)f(y)dy \equiv \int_0^\infty\frac{2t\sqrt yf(y)}{\pi[t^4+2t^2(x+y)+(x-y)^2]}dy, t> 0, x>0.
 \eque
Strictly speaking the semigroups are not defined on the same space of functions, but this will cause little inconvenience when proving convergence, as explained in Theorem~\ref{thm:4.2.11}. For now, in~\eqref{eq:semigroup_X} and~\eqref{eq:semigroup_Z} it suffices to consider $f\in B([0,\infty))$, the space of bounded and measurable functions on $[0,\infty)$.
 
It is shown in \citep{szablowski12qWiener} that the semigroup of $X\topp q$ is Feller. Since $\wt X\topp{q,\epsilon}$ is a linear transformation of $X\topp q$,  $\{P\topp{q,\epsilon}_t\}_{t\ge 0}$ is Feller. Here we show that   $\{P_t\}_{t\ge 0}$ is also Feller. Let $\what C(\R)$ denote the space of all continuous functions on $\R_+:=[0,\infty)$ such that $\lim_{x\to\infty} f(x) = 0$, equipped with the sup norm $\nn f_\infty:=\sup_{x\in\R_+}|f(x)|$. 
\begin{Lem}\label{lem:continuity}
The semigroup $(P_t)_{t\ge0}$ is a Feller semigroup in the sense that for all $f\in \what C(\R_+)$, $P_tf\in \what C(\R_+)$ for all $t>0$ and $\lim_{t\downarrow0}\nn{P_tf-f}_{\infty} = 0$. 
\end{Lem}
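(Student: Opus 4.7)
My plan is to work directly with the explicit transition density and exploit an algebraic rewrite of the denominator that makes every estimate tractable. The starting observation is
\[
t^4 + 2t^2(x+y) + (y-x)^2 = (y - x - t^2)^2 + 4t^2 y,
\]
so that
\[
p_{0,t}(x,y) = \frac{2t\sqrt y}{\pi[(y-x-t^2)^2 + 4t^2 y]}.
\]
This form makes clear that the distribution of $\Z^x_t$ is concentrated around $x + t^2$ with Cauchy-like tails, and that the density decays like $2t/(\pi y^{3/2})$ as $y\to\infty$, which is what drives the integrability and uniform bounds below.

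For the mapping property $P_tf\in\widehat C(\R_+)$, continuity of $P_tf$ in $x$ follows from dominated convergence: for $x$ in any bounded neighborhood, $p_{0,t}(x,\cdot)$ is dominated by a single integrable majorant obtained by combining a uniform bound on compact $y$-intervals with $p_{0,t}(x,y)\le 4t/(\pi y^{3/2})$, valid once $(y-x-t^2)^2\ge y^2/4$. For the vanishing at infinity, I estimate for any fixed $M>0$,
\[
\proba(\Z^x_t\le M) \le \int_0^M \frac{2t\sqrt y}{\pi(x-M-t^2)^2}\,dy = \frac{4tM^{3/2}}{3\pi(x-M-t^2)^2}\xrightarrow{x\to\infty}0,
\]
and then for $f\in\widehat C(\R_+)$ pick $M$ so that $|f|<\eta$ on $(M,\infty)$, giving $|P_tf(x)|\le\eta+\|f\|_\infty\proba(\Z^x_t\le M)\to\eta$ as $x\to\infty$.

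For the strong continuity at $0$, I bound $|P_tf(x)-f(x)|\le\E|f(\Z^x_t)-f(x)|$ and split on $x\in[0,M+1]$ versus $x>M+1$, where $M$ is such that $|f|<\eta$ on $(M,\infty)$. On $[0,M+1]$ I use uniform continuity of $f$ on $[0,M+2]$ to pick $\delta\in(0,1)$ with $|f(u)-f(v)|<\eta$ whenever $|u-v|<\delta$; the task reduces to showing
\[
\sup_{x\in[0,M+1]}\proba(|\Z^x_t-x|\ge\delta)\to 0 \qmmas t\downarrow 0,
\]
which I establish by splitting the $y$-integral into $\{y\le x-\delta\}$, $\{x+\delta\le y\le 2(x+t^2)\}$ and $\{y>2(x+t^2)\}$, then using $(y-x-t^2)^2\ge\delta^2/4$ on the first two pieces and $(y-x-t^2)^2\ge y^2/4$ on the last. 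Each piece is $O_{M,\delta}(t)$. On $\{x>M+1\}$, $|f(x)|<\eta$ and $\proba(\Z^x_t\le M)\le 4tM^{3/2}/(3\pi)$ uniformly in $x$, so $|P_tf(x)-f(x)|\le 2\eta+\|f\|_\infty\proba(\Z^x_t\le M)\le 3\eta$ for $t$ small. Combining the two regimes and letting $\eta\downarrow 0$ gives the result.

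The main obstacle is the $x$-uniform estimate on $\proba(|\Z^x_t-x|\ge\delta)$ over $[0,M+1]$: the Cauchy-like scale $2t\sqrt x$ of the bulk of the density depends on $x$, but stays bounded on this compact interval, and the explicit rewrite of the denominator is precisely what makes the three-piece split produce a bound with a constant depending only on $M$ and $\delta$.
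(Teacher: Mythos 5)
Your proof is correct, and it diverges from the paper's in one substantive way. For the mapping property $P_tf\in\what C(\R_+)$ the two arguments are essentially the same (dominated convergence for continuity, plus a tail bound on $\proba(\Z^x_t\le M)$ for the decay at infinity; the paper extracts the bound from the $2t^2x$ term in the denominator, you from the completed square $(y-x-t^2)^2$, which is a nice rewrite worth keeping). The real difference is in the strong continuity: the paper, having established the Feller mapping property, invokes \citep[Proposition III.2.4]{revuz99continuous} to reduce $\lim_{t\downarrow0}\nn{P_tf-f}_\infty=0$ to the \emph{pointwise} statement $\lim_{t\downarrow0}P_tf(x)=f(x)$, which it then checks with a single $\delta$-split of the integral. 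You instead prove the uniform convergence directly, splitting $x$ into a compact window $[0,M+1]$ (uniform continuity of $f$ plus a uniform-in-$x$ estimate $\sup_{x\le M+1}\proba(|\Z^x_t-x|\ge\delta)=O_{M,\delta}(t)$) and the far regime $x>M+1$ (where $|f|$ is small and $\proba(\Z^x_t\le M)=O(t)$ uniformly). The paper's route is shorter because the uniformization is outsourced to a general fact about Feller semigroups; yours is self-contained and yields an explicit $O(t)$ rate, at the cost of the more delicate three-piece decomposition. One detail to make explicit in your third region $\{y>2(x+t^2)\}$: the bound $\int y^{-3/2}\,dy$ is only $O_\delta(t)$ after intersecting with $\{y\ge x+\delta\}$, which forces $y\ge\delta$ and keeps the lower limit of integration away from $0$; since the whole integral is taken over $\{|y-x|\ge\delta\}$ this is automatic, but as written the piece $\{y>2(x+t^2)\}$ alone would not give a small bound when $x$ and $t$ are both small.
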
 
\begin{proof}
From~\eqref{eq:semigroup_Z}, the fact that $P_tf$ is continuous follows from the dominated convergence theorem. To see $\lim_{x\to\infty}|P_tf(x)| = 0$, for all $\epsilon>0$ choose $M_\epsilon>0$ such that $\sup_{y>M_\epsilon}|f(y)|<\epsilon$, and observe
\begin{multline*}
\limsup_{x\to\infty}|P_tf(x)|\leq \limsup_{x\to\infty}\int_0^{M_\epsilon}p_t(x,y)|f(y)|dy + \epsilon \\
\leq\limsup_{x\to\infty}\int_0^{M_\epsilon}\frac{\sqrt y\,{ \nn f_\infty}}{\pi xt}dy+\epsilon = \epsilon.
\end{multline*} 
So $P_tf\in \what C(\R_+)$. For the second statement, by \citep[Proposition III.2.4]{revuz99continuous} it suffices to show
\[
\lim_{t\downarrow0}P_tf(x) = f(x), \mfa x\in\R_+.
\]
To see this, we have
\begin{multline*}
|P_tf(x) - f(x)| \\\leq  \int_{y>0, |x-y|\le\delta}p_t(x,y)|f(y)-f(x)|dy + \int_{y>0, |x-y|>\delta}p_t(x,y)|f(y)-f(x)|dy\\
\leq 
 \sup_{y>0,|y-x|\le \delta}|f(y)-f(x)| + \frac{4t\nn f_\infty}\pi \int_{y>0,|x-y|>\delta}\frac{\sqrt y}{(x-y)^2}dy.
\end{multline*}
On the right-hand side of the last inequality above, the first term can be arbitrarily small by taking $\delta>0$ small enough due to the continuity of $f$ at $x$, and the second term  goes to zero as $t\downarrow0$. 
\end{proof}

Throughout, we use a generic symbol $\proba$ to denote the laws of different Markov processes, for the sake of simplicity. These processes are not necessarily defined on a common probability space, but we always assume that they take values  in the space $D$. Moreover, when indicating the law of a Markov process, either $\wt X\topp{q,\epsilon}$ or $\Z$, starting from a fixed point $w$ at time zero, we use the notation $\proba(\cdot\mid \wt X\topp{q,\epsilon}_0 = w)$ or $\proba(\Z^w\in\cdot)$, respectively. 


An important property of the tangent process is self-similarity. That is,
\equh\label{eq:SS}
\ccbb{\Z_{\lambda t}^w}_{t\ge 0}\eqd \lambda^2\ccbb{\Z^{w/\lambda^2}_t}_{t\ge0}, \mfa w,\lambda>0.
\eque
Here and in the sequel, we let `$\eqd$' denote `equal in finite-dimensional distributions'. 

It is also useful to keep in mind that the Markov process $\Z$ has stationary distribution with density proportional to $w^{1/2}dw$. This is easy to see as, for $\pi(w) = w^{1/2}$, we have $\pi(x)p_{s,t}(x,y) = \pi(y)p_{s,t}(y,x)$ for all $s<t$ and $x,y>0$. So the stationary distribution is infinite. Another useful fact is that  the process $\Z$ is transient.
\begin{Prop}\label{prop:transience}
For all $w\ge 0$, $\lim_{t\to\infty}\Z_t^w = \infty$ almost surely.
\end{Prop}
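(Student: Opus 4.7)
The plan is to combine the self-similarity~\eqref{eq:SS} with a direct bound on the one-step density~\eqref{eq:Biane1/2}: first I establish almost sure divergence along the integer times via a Borel--Cantelli argument, and then I upgrade to continuous time using the Markov property together with the Khoshnevisan--Xiao escape inequality that will already be invoked in Section~\ref{sec:EVT}.

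\emph{Step 1 (integer times).} Taking $\lambda=T$ in~\eqref{eq:SS} yields $\Z^w_T\eqd T^2\,\Z^{w/T^2}_1$, so that for any $K>0$ and $T$ large enough that $w/T^2\le 1$,
\[
\proba\pp{\Z^w_T\le K}=\proba\pp{\Z^{w/T^2}_1\le K/T^2}.
\]
From~\eqref{eq:Biane1/2} one reads off $p_{0,1}(x,y)\le c\sqrt{y}$ uniformly on $[0,1]^2$, and integrating in $y\in[0,K/T^2]$ gives $\proba(\Z^w_T\le K)\le c'K^{3/2}T^{-3}$. This is summable in $T=n\in\N$, so the Borel--Cantelli lemma yields $\liminf_{n\to\infty}\Z^w_n\ge K$ almost surely; since $K$ is arbitrary, $\Z^w_n\to\infty$ almost surely along the integers.

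\emph{Step 2 (continuous time).} To pass from integer times to $\lim_{t\to\infty}\Z^w_t=\infty$ I would control $\inf_{t\in[n,n+1]}\Z^w_t$. By the Markov property this conditional infimum, given $\Z^w_n=v$, has the same law as $\inf_{s\in[0,1]}\Z^v_s$, and~\eqref{eq:SS} with $\lambda=\sqrt K$ rewrites the relevant tail as
\[
\proba\pp{\inf_{s\in[0,1]}\Z^v_s\le K}=\proba\pp{\inf_{u\in[0,1/\sqrt K]}\Z^{v/K}_u\le 1}.
\]
The Khoshnevisan--Xiao inequality~\citep{khoshnevisan97escape,xiao98asymptotic}, the very tool invoked in Section~\ref{sec:EVT}, bounds the right-hand side by a quantity that vanishes polynomially as $v\to\infty$. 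Conditioning on $\Z^w_n$ and using the $T^{-3}$ tail from Step~1 to handle the small values of $\Z^w_n$, the resulting estimate for $\proba(\inf_{t\in[n,n+1]}\Z^w_t\le K)$ is summable in $n$, and a final Borel--Cantelli gives $\liminf_{t\to\infty}\Z^w_t=\infty$ almost surely.

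The main obstacle is Step~2: because $\Z$ is a pure jump c\`adl\`ag process, the value $\Z^w_n$ does not control $\inf_{t\in[n,n+1]}\Z^w_t$ by any soft modulus-of-continuity or tightness argument; a genuine quantitative escape-rate bound is needed. This is exactly what Khoshnevisan--Xiao provides for positive self-similar Markov processes, which is why it reappears here as well as in the double-sum analysis of Section~\ref{sec:EVT}. Once Step~2 is in hand, Step~1 is essentially an immediate consequence of the scaling and the explicit density, and the rest of the argument is a routine application of Borel--Cantelli.
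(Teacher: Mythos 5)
Your argument is correct, but it takes a genuinely different route from the paper's. The paper deduces transience from the general criterion for standard processes in \citep[p.~89, (4.24)]{blumenthal68markov}: it verifies that the potential operator $Uf(x)=\int_0^\infty P_tf(x)\,dt$ maps $\what C(\R_+)$ into continuous functions, and that the potential $U(w,[0,K])=\int_0^\infty\proba(\Z_t^w\in[0,K])\,dt$ of every compact set is finite --- the latter via exactly the same $t^{-3}$ decay of $p_{0,t}(w,\cdot)$ near the origin that powers your Step 1. You instead run a hands-on Borel--Cantelli argument: divergence along integer times from the explicit density plus self-similarity, then the Khoshnevisan--Xiao escape inequality (the paper's Lemma 4.1, which together with \eqref{eq:infp} yields $\proba(\inf_{t\in[0,T]}\Z_t^v\le 1)\le C_T v^{-2}\wedge 1$ and hence, after the scaling you perform, a bound of order $K^2/v^2$ on the conditional infimum over $[n,n+1]$ given $\Z^w_n=v$) to interpolate between integers; splitting on $\{\Z_n^w\le n\}$ versus $\{\Z_n^w> n\}$ makes both contributions summable, as you indicate. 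Your diagnosis of where the real work lies is accurate: since $\Z$ is a pure-jump c\`adl\`ag process, the interpolation step genuinely requires a quantitative escape bound, and no soft tightness argument would do. What the paper's route buys is economy --- once the Blumenthal--Getoor criterion is invoked, no interpolation between integer times is needed at all; what yours buys is a self-contained argument (modulo Lemma 4.1, whose proof does not depend on transience, so there is no circularity in the forward reference) together with an explicit polynomial rate for $\proba(\inf_{t\ge n}\Z_t^w\le K)$. One cosmetic remark: the restriction $w/T^2\le 1$ in your Step 1 is unnecessary, since $p_{0,1}(x,y)\le 2\sqrt y/\pi$ holds for all $x,y>0$.
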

\begin{proof}
We follow \citep[p.~89, (4.24)]{blumenthal68markov}. By \citep[p.~89, (4.23)]{blumenthal68markov}, $\Z$ is a standard process. So it suffices to verify the two assumptions in (4.24) therein. 

First, introduce $Uf(x) = \int_0^\infty P_tf(x)dt$. We show that for all $f\in\what C(\R_+)$, $Uf$ is continuous. Then, 
\equh\label{eq:Blumenthal}
|Uf(x)-Uf(x')| \le \int_0^\delta \abs{P_tf(x)-P_tf(x')}dt + \abs{\int_\delta^\infty P_tf(x)-P_tf(x')dt}.
\eque
For the first term on the right-hand side above, it can be bounded by 
\begin{multline*}
\int_0^\delta|P_tf(x) - f(x)|+|f(x)-f(x')|+|f(x')-P_t(x')|dt \\
\le 
\delta\pp{2\sup_{t\in[0,\delta]}\nn{P_tf-f}_\infty+|f(x)-f(x')|}.
\end{multline*}
For the second term on the right-hand side of \eqref{eq:Blumenthal}, for $\delta>0$ fixed, using the formula of $P_tf(x)$ in \eqref{eq:semigroup_Z} one can show 
$\lim_{x'\to x}\int_\delta^\infty P_tf(x')dt = \int_\delta^\infty P_tf(x)dt$, by the dominated convergence theorem. It then follows that $\lim_{x'\to x}|Uf(x)-Uf(x')|\le 2\delta \sup_{t\in[0,\delta]}\nn{P_tf-f}_\infty$. Letting $\delta\downarrow 0$, it follows from Lemma \ref{lem:continuity} that $Uf$ is continuous. 

Second, introduce $U(w,B) = \int_0^\infty \proba(\Z_t^w\in B)dt$. We show that for all $B = [0,K], K<\infty$, $w\ge 0$, $U(w,B)<\infty$. Indeed,
\begin{align*}
U(w,[0,K]) & = \int_0^1\proba(\Z_t^w\le K )dt + \int_1^\infty \proba(\Z_t^w\le K)dt \\
& \le 1 + \int_1^\infty\int_0^K \frac{2t\sqrt y}{\pi[(y-w)^2+2(y+w)t^2+t^4]}dydt\\
& \le 1+\frac 2\pi\int_1^\infty \frac1{t^3}dt \int_0^K\sqrt ydy<\infty. 
\end{align*}
We have thus verified that $\Z^w$ satisfies the two conditions  in \citep[p.~89, (4.24)]{blumenthal68markov}.
\end{proof}

\section{Weak convergence to the tangent process}\label{sec:tangent}

In this section we prove the following weak convergence of the tangent process.
 \begin{Thm}\label{thm:tangent}
       For all $q\in(-1,1)$, $w\ge 0$, under $\proba(\cdot\mid \wt X_0\topp {q,\epsilon} = w)$,
   \[
  \ccbb{\wt X\topp{q,\epsilon}_t}_{t\ge0}
  \weakto\ccbb{\Z_{t}^w}_{t\ge0}
   \]
in $D([0,\infty))$ as $\epsilon\downarrow 0$.
 \end{Thm}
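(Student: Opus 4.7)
The plan is to deduce path-space tightness, and hence weak convergence in $D([0,\infty))$, from an Ethier--Kurtz semigroup convergence theorem \citep[Theorem 4.2.11]{ethier86markov}; the finite-dimensional convergence has already been established in \citep{bryc16local}. That theorem requires both the pre-limit and the limit semigroups to be Feller and a suitable strong convergence of the former to the latter; the initial condition is automatic here since both processes start at the deterministic point $w$.

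For the Feller structure, the limit semigroup $(P_t)_{t\ge 0}$ is Feller by Lemma~\ref{lem:continuity}, while $(P_t\topp{q,\epsilon})_{t\ge 0}$ is Feller because $\wt X\topp{q,\epsilon}$ is a deterministic affine rescaling of the Feller $q$-Ornstein--Uhlenbeck process \citep{szablowski12qWiener}. The mismatch of state spaces $[0,4/\epsilon^2]\subset\R_+$ is absorbed via the natural inclusion map in the version of the theorem that allows differing Feller state spaces. The core analytic step is then to prove, for each $f\in\what C(\R_+)$ and each $t>0$,
\[
\lim_{\epsilon\downarrow 0}\sup_{x\in[0,4/\epsilon^2]}\abs{P_t\topp{q,\epsilon}f(x)-P_tf(x)}=0.
\]
For fixed $x>0$, I would obtain the pointwise convergence $P_t\topp{q,\epsilon}f(x)\to P_tf(x)$ by combining the known pointwise density convergence $p_{0,t}\topp{q,\epsilon}(x,y)\to p_{0,t}(x,y)$ of \citep{bryc16local} with a dominated convergence argument powered by an $\epsilon$-uniform, $y$-integrable upper bound on $p_{0,t}\topp{q,\epsilon}(x,\cdot)$. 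Uniformity in $x$ on compact subsets of $(0,\infty)$ would then follow by an equicontinuity argument using the Feller regularity of both $P_t f$ and $P_t\topp{q,\epsilon}f$; the behavior for $x$ large (including $x$ close to the upper boundary $4/\epsilon^2$) is handled by exploiting the decay of $f$ at infinity together with a matching tail bound on the densities.

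The main obstacle will be constructing the $\epsilon$-uniform dominating function for $p_{0,t}\topp{q,\epsilon}(x,y)$. It must be integrable in $y$, should mirror the polynomial tail $y^{-3/2}$ of the limit kernel $p_{0,t}(x,y)$ as $y\to\infty$, must remain controlled as $y\downarrow 0$, and should be uniform over $x$ in a compact subset of $(0,\infty)$. This requires a careful asymptotic analysis of the infinite product $\prod_{k\ge 0}\varphi_{q,k}^{-1}$ together with the prefactor $(e^{-2\epsilon t};q)_\infty$ in the regime $\epsilon\downarrow 0$, tracking the cancellations that produce the limiting denominator $(y-x)^2+2(x+y)t^2+t^4$. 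Once such an estimate is secured, the dominated convergence program closes, the semigroup convergence displayed above holds, and Theorem 4.2.11 delivers the stated weak convergence in $D([0,\infty))$.
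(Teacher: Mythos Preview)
Your overall strategy is the paper's: invoke \citep[Theorem 4.2.11]{ethier86markov} to reduce to the uniform semigroup convergence
\[
\lim_{\epsilon\downarrow 0}\sup_{x\in[0,4/\epsilon^2]}\abs{P_t\topp{q,\epsilon}f(x)-P_tf(x)}=0,\qquad f\in\what C(\R_+),\ t>0,
\]
and supply it via an $\epsilon$-uniform upper bound on $p_{0,t}\topp{q,\epsilon}(x,y)$ extracted from the infinite-product formula. That is exactly the content of Lemmas~\ref{lem:pst_upperbound}--\ref{lem:uniform_convergence} in the paper.

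One step in your outline is not justified as written. You say that ``uniformity in $x$ on compact subsets of $(0,\infty)$ would then follow by an equicontinuity argument using the Feller regularity of both $P_tf$ and $P_t\topp{q,\epsilon}f$.'' The Feller property gives continuity of each individual $P_t\topp{q,\epsilon}f$, not equicontinuity of the family $\{P_t\topp{q,\epsilon}f\}_{\epsilon>0}$; pointwise convergence plus continuity of each member does not upgrade to uniform convergence. (Equicontinuity \emph{can} be recovered, but from the very density bound you plan to build, not from the Feller property.) The paper avoids this detour entirely: it proves directly that $p_{0,t}\topp{q,\epsilon}(x,y)\to p_{0,t}(x,y)$ \emph{uniformly} on compact boxes in $(x,y,t)$ (Lemma~\ref{lem:uniform_convergence}, via the elementary product estimate Lemma~\ref{lem:uniform}), and then runs a two-parameter truncation---cut $y$ at $M_1$ using $f\in\what C(\R_+)$, then split $x$ at $M_2$ and use the density upper bound (Lemma~\ref{lem:pst_upperbound}) to show both $P_t\topp{q,\epsilon}(x,[0,M_1])$ and $P_t(x,[0,M_1])$ are $O(M_1^{3/2}/(M_2-M_1)^2)$ for $x>M_2$---finishing with $M_2=2M_1\to\infty$. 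Note also that the compact sets must include $x=0$, not just $x\in(0,\infty)$. Your density-bound program is precisely the right ingredient; simply redirect it from ``equicontinuity via Feller'' to ``uniform density convergence on compacts,'' and the argument closes.
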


Now to prove Theorem~\ref{thm:tangent}, we recall the following version of \citet[Theorem 4.2.11]{ethier86markov} that  characterizes the weak convergence of Markov processes by the corresponding semigroups.
\begin{Thm}\label{thm:4.2.11}
For the convergence of Theorem~\ref{thm:tangent} to hold, it suffices to show, for all $f\in\what C(\R_+)$,
\equh\label{eq:ethier}
\lim_{\epsilon\downarrow0}
\sup_{x\in[0,4/\epsilon^2]}\abs{P_t\topp {q,\epsilon} f(x) - P_tf(x)} = 0, \mfa t>0.
\eque
\end{Thm}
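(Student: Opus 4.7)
The plan is to invoke a Trotter--Kato-style criterion \citep[Theorem 4.2.11]{ethier86markov} essentially verbatim, once the discrepancy between the state spaces of $\wt X\topp{q,\epsilon}$ and $\Z$ is reconciled. First I will set $E_\epsilon := [0,4/\epsilon^2]$ and $E := \R_+$, and take $\pi_\epsilon : \what C(E)\to C(E_\epsilon)$ to be the restriction map $\pi_\epsilon f := f|_{E_\epsilon}$. This is a contraction that plays the role of the canonical projection in the Ethier--Kurtz framework. Both semigroups act as Feller semigroups on the natural spaces: $\{P_t\}_{t\ge 0}$ on $\what C(E)$ by Lemma~\ref{lem:continuity}, and $\{P_t\topp{q,\epsilon}\}_{t\ge 0}$ on $C(E_\epsilon)$ as an immediate consequence of \citep{szablowski12qWiener} together with the fact that $\wt X\topp{q,\epsilon}$ is an affine rescaling of $X\topp q$.

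With these identifications, the hypothesis \eqref{eq:ethier} reads
\[
\bnn{P_t\topp{q,\epsilon}(\pi_\epsilon f)-\pi_\epsilon(P_tf)}_{C(E_\epsilon)}\longrightarrow 0 \qmmas \epsilon\downarrow 0
\]
for each $f\in\what C(\R_+)$ and each $t>0$; this is precisely the semigroup-convergence assumption of \citep[Theorem 4.2.11]{ethier86markov}. The second hypothesis of that theorem, convergence of the initial distributions, is trivial here: both $\wt X\topp{q,\epsilon}$ and $\Z^w$ start deterministically at $w$, and $w\in E_\epsilon$ for all $\epsilon$ small enough, so the initial Dirac masses coincide. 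The pointwise-in-$t$ convergence in \eqref{eq:ethier} can be upgraded, via a standard telescoping argument using the contraction property of the semigroups together with the semigroup identity, to uniform convergence on compact $t$-intervals, which is what the classical statement of the theorem requires.

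The main obstacle I anticipate is not the analytic convergence --- which has been packaged into the hypothesis and will be verified subsequently --- but rather the bookkeeping around the shrinking state space $E_\epsilon\subset E$ and the associated projection $\pi_\epsilon$. One must check that the Ethier--Kurtz framework, which is most naturally stated for a fixed state space, adapts to this setting where the state space grows with $\epsilon$; since $E_\epsilon\subset E$ and the projection is literal restriction, this reduces to a routine check that restriction commutes appropriately with the Feller property and the supremum norm. Once this is in place, \citep[Theorem 4.2.11]{ethier86markov} delivers $\wt X\topp{q,\epsilon}\weakto \Z^w$ in $D([0,\infty);E)$ directly, which is exactly the assertion of Theorem~\ref{thm:tangent}.
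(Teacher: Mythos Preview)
Your proposal is correct and follows essentially the same route as the paper: set up the restriction map $\pi_\epsilon$ from functions on $\R_+$ to functions on $[0,4/\epsilon^2]$, recognize \eqref{eq:ethier} as the semigroup-convergence hypothesis of \citep[Theorem~4.2.11]{ethier86markov}, and note that the initial distributions are both $\delta_w$ so their convergence is trivial. The paper's proof is slightly terser (it works on $B(\R_+)$ rather than $\what C(\R_+)$ for the domain of $\pi_\epsilon$, and does not spell out the pointwise-to-compact-uniform upgrade in $t$), but the argument is the same.
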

\begin{proof}
Fix $q$. For each $\epsilon>0$, $(P\topp{q,\epsilon}_t)_{t\ge 0}$ is a Feller semigroup on $B([0,4/\epsilon^2])$, the Banach space of bounded real-valued measurable functions on $[0,4/\epsilon^2]$ with supremum norm. 
At the same time, we have seen that $(P_t)_{t\ge0}$ is a Feller semigroup on $\what C(\R_+)\subset B(\R_+)$. The semigroups of interest, however,  are not defined on the same spaces. To deal with this issue, as in \citep[Theorem 4.2.11]{ethier86markov}, introduce $\pi_\epsilon:B(\R_+)\to B([0,4/\epsilon^2])$ defined by  $(\pi_\epsilon f)(x):= f(x)$ for all $f\in B(\R_+), x\in[0,4/\epsilon^2]$. Then, \citep[Theorem 4.2.11]{ethier86markov} states that the desired convergence follows from
\[
\lim_{\epsilon\downarrow0}\abs{P_t\topp{q,\epsilon}(\pi_\epsilon f)(x) - \pi_\epsilon (P_tf)(x)} = 0,
\]
which is equivalent to~\eqref{eq:ethier},
and the convergence of the initial distribution. The latter convergence is obvious.
\end{proof}

We prepare a few lemmas to start with. For convenience, write $p\topp{q,\epsilon}_{0,t}(x,y) = 0$ for all $x>4/\epsilon^2$.

\begin{Lem}\label{lem:pst_upperbound}
For all $q\in(-1,1)$, there exists a constant $C$ depending only on $q$, such that
\equh\label{eq:upper_pstq}
p_{0,t}\topp{q,\epsilon}(x,y) \leq C\frac{te^{2\epsilon t}\cdot \sqrt y}{16\sinh^4(\epsilon t/2)/\epsilon^4+(x-y)^2} \qmfa x,y,t,\epsilon>0.
\eque
\end{Lem}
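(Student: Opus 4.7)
The plan is to extract the singular behavior from the $k=0$ factor in the product representation of the transition density and to bound every other piece by a $q$-dependent constant. Writing $\tilde x = b_q^- + x\epsilon^2/\sqrt{1-q}$ and $\tilde y = b_q^- + y\epsilon^2/\sqrt{1-q}$, we have
\[
p_{0,t}\topp{q,\epsilon}(x,y) = (e^{-2\epsilon t};q)_\infty \prodd k0\infty \frac{1}{\varphi_{q,k}(\epsilon t,\tilde x,\tilde y)}\cdot f\topp q(\tilde y)\cdot \frac{\epsilon^2}{\sqrt{1-q}},
\]
and I would estimate the four groups of factors separately. The crucial manipulation is to pass to the variables $u = \sqrt{1-q}\,\tilde x = x\epsilon^2-2$ and $v = y\epsilon^2-2$, both in $[-2,2]$, and to diagonalize the quadratic part of $\varphi_{q,k}$ in the basis $(u+v,u-v)$, via $uv = ((u+v)^2-(u-v)^2)/4$ and $u^2+v^2 = ((u+v)^2+(u-v)^2)/2$, which gives
\[
\varphi_{q,k}(\delta,\tilde x,\tilde y) = (1-a^2q^{2k})^2 - \frac{aq^k(1-aq^k)^2}{4}(u+v)^2 + \frac{aq^k(1+aq^k)^2}{4}(u-v)^2,
\]
with $a = e^{-\delta}$; this identity is a direct expansion.

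For the $k=0$ term I would bound the nonpositive $(u+v)^2$-contribution from below by $-4a(1-a)^2$ using $|u+v|\le 4$, apply the elementary identity $(1+a)^2-4a = (1-a)^2$ to simplify $(1-a^2)^2 - 4a(1-a)^2 = (1-a)^4$, and use $a(1+a)^2/4 \ge a^2$ on the $(u-v)^2$-coefficient. Together with $(u-v)^2 = (x-y)^2\epsilon^4$ and $1-e^{-\epsilon t} = 2e^{-\epsilon t/2}\sinh(\epsilon t/2)$ this yields
\[
\varphi_{q,0}(\epsilon t,\tilde x,\tilde y) \ge (1-e^{-\epsilon t})^4 + e^{-2\epsilon t}(x-y)^2\epsilon^4 = e^{-2\epsilon t}\bb{16\sinh^4(\epsilon t/2)+(x-y)^2\epsilon^4},
\]
which already supplies the denominator and the $e^{2\epsilon t}$ factor in the claim. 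For $k\ge 1$, a short case analysis on the sign of $aq^k$ in the same diagonalized identity yields $\varphi_{q,k}\ge(1-|q|^k)^4$ uniformly in $(\epsilon,t,\tilde x,\tilde y)$, so $\prodd k1\infty \varphi_{q,k}^{-1}\le(|q|;|q|)_\infty^{-4}$.

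The two remaining factors supply the small multiplicative factor $\epsilon^4 t$ needed to match the claim. From $\sqrt{4-(1-q)\tilde y^2} = \epsilon\sqrt{y(4-y\epsilon^2)}\le 2\epsilon\sqrt y$ together with the uniform upper bound $\prodd k1\infty\bb{(1+q^k)^2-(1-q)\tilde y^2 q^k}\le\prodd k1\infty(1+C_q|q|^k) < \infty$, one gets $f\topp q(\tilde y)\epsilon^2/\sqrt{1-q}\le C_q\,\epsilon^3\sqrt y$; and $|1-e^{-2\epsilon t}q^k|\le 1+|q|^k$ for $k\ge 1$ combined with $1-e^{-2\epsilon t}\le 2\epsilon t$ gives $(e^{-2\epsilon t};q)_\infty\le 2\epsilon t\prodd k1\infty(1+|q|^k)$. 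Multiplying the four estimates and simplifying produces \eqref{eq:upper_pstq} with a constant depending only on $q$.

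The main obstacle is the clean lower bound on $\varphi_{q,0}$: in the original $(\tilde x,\tilde y)$-coordinates the expression $(1-a^2)^2 - (1-q)a(1+a^2)\tilde x\tilde y + (1-q)a^2(\tilde x^2+\tilde y^2)$ resists direct factorization, and the key is to recognize that the change of basis $(u+v,u-v)$ diagonalizes the quadratic part and exposes the algebraic identity $(1+a)^2-4a = (1-a)^2$ that collapses $(1-a^2)^2 - 4a(1-a)^2$ into the compact form $(1-a)^4$. The subsequent inequality $a(1+a)^2/4\ge a^2 = e^{-2\epsilon t}$ is elementary but essential: it shapes the $(x-y)^2$-coefficient so that the final bound carries a common prefactor $e^{-2\epsilon t}$ and absorbs into $16\sinh^4(\epsilon t/2)/\epsilon^4$ after dividing through by $\epsilon^4$. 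The sign-case analysis for $\varphi_{q,k}$ with $k\ge 1$ is routine by comparison.
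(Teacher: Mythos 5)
Your proof is correct and follows essentially the same route as the paper: the same factor-by-factor decomposition of $p_{0,t}^{(q,\epsilon)}$, with the prefactor contributing $\epsilon^3 t$, the $k=0$ factor contributing the denominator via $\varphi_{q,0}\ge e^{-2\epsilon t}[16\sinh^4(\epsilon t/2)+(x-y)^2\epsilon^4]$, and the $k\ge1$ product bounded by $\prod_k(1+|q|^k)^2/(1-|q|^k)^4$. The only difference is cosmetic: you derive the lower bounds on $\varphi_{q,0}$ and $\varphi_{q,k}$ from scratch by diagonalizing in $(u+v,u-v)$, whereas the paper imports the equivalent hyperbolic-function identity and minimization from \citep{bryc16local}.
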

\begin{proof}
Write
\begin{multline}\label{eq:3terms}
p_{0,t}\topp{q,\epsilon}(x,y) = \frac{\epsilon^2 (e^{-2\epsilon t};q)_\infty(q)_\infty}{2\pi}\\
\times \frac{\sqrt{4-\spp{2-y\epsilon^2}^2}\indd{y\in[0,4/\epsilon^2]}}{\varphi_{q,0}\spp{\epsilon t,b_q^-+\frac{x\epsilon^2}{\sqrt{1-q}},b_q^-+\frac{y\epsilon^2}{\sqrt{1-q}}}}\times \prodd k1\infty \frac{\psi_{q,k}(b_q^-+\frac{y\epsilon^2}{\sqrt{1-q}})}{\varphi_{q,k}(\epsilon t,b_q^-+\frac{x\epsilon^2}{\sqrt{1-q}}, b_q^-+\frac{y\epsilon^2}{\sqrt{1-q}})},
\end{multline}
with $\psi_{q,k}(x):= (1+q^k)^2-(1-q)x^2q^k$.

The first term is bounded by 
\[
\frac{\epsilon^2 (e^{-2\epsilon t};q)_\infty(q)_\infty}{2\pi}\leq \frac{\epsilon^3t\cdot (q)_\infty}\pi.
\]
For the second, since (see \citep[Section 2.1]{bryc16local})
\begin{multline*}
\varphi_{q,0}(\delta,x,y) 
= e^{-2\delta}\bb{4\sinh^2(\delta) + (1-q)(x-y)^2 + 2(1-q)xy(1-\cosh(\delta))}\\
\ge  e^{-2\delta}\bb{16\sinh^4(\delta/2) + (1-q)(x-y)^2},
\end{multline*}
we have
\[
 \frac{\sqrt{4-\spp{2-y\epsilon^2}^2}\indd{y\in[0,4/\epsilon^2]}}{\varphi_{q,0}\spp{\epsilon t,b_q^-+\frac{x\epsilon^2}{\sqrt{1-q}},b_q^-+\frac{y\epsilon^2}{\sqrt{1-q}}}}\leq \frac{2\sqrt y\epsilon}{e^{-2\epsilon t}[16\sinh^4(\epsilon t/2)+(x-y)^2\epsilon^4]}.
\]

For the third term, since 
\[
\prodd k1\infty\psi(x,y)\cdot \indd{|y|\leq 2/\sqrt{1-q}}\leq \prodd k1\infty (1+|q|^k)^2,
\]
and
\[
\min_{|x|,|y|\leq \frac2{\sqrt{1-q}}}\varphi_{q,k}(\delta,x,y) = (1-e^{-\delta}q^k)^4\geq (1-|q|^k)^4, k\in\N, \delta>0,
\]
(see \citep[Section 2.1]{bryc16local}), we have
\[
\prodd k1\infty\frac{\psi_{q,k}(y)}{\varphi_{q,k}(\delta,x,y)}\indd{|y|\leq 2/\sqrt{1-q}}\leq \prodd k1\infty\frac{(1+|q|^k)^2}{(1-|q|^k)^4}<\infty.
\]
The desired inequality now follows.
\end{proof}
In the sequel, for sequences of real numbers $\{a_\epsilon\}_{\epsilon>0}$ and $\{b_\epsilon\}_{\epsilon>0}$, we let $a_\epsilon\sim b_\epsilon$ as $\epsilon\downarrow0$ denote the asymptotic equivalence $\lim_{\epsilon\downarrow0}a_\epsilon/b_\epsilon = 1$. 

\begin{Lem}\label{lem:uniform}
Suppose $\prodd k1\infty a_k(\epsilon)$ and $\prodd k1\infty b_k(\epsilon)$ are absolutely convergent. If there exists a function $\gamma(\epsilon)$ such that 
\[
\sif k1 \abs{\frac{a_k(\epsilon)}{b_k(\epsilon)}-1}\leq \gamma(\epsilon) \qmand \lim_{\epsilon\downarrow0}\gamma(\epsilon) = 0,
\]
then there exists $\epsilon_0>0$ such that for all $\epsilon\in(0,\epsilon_0)$, 
\[
\abs{\prodd k1\infty\frac{a_k(\epsilon)}{b_k(\epsilon)}-1} \leq 4\gamma(\epsilon). 
\]
\end{Lem}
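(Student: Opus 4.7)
The plan is to reduce the infinite product to a series via logarithms and then return to a multiplicative estimate via the exponential, exploiting small-argument Taylor bounds at each stage.

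First, I would choose $\epsilon_0>0$ small enough that $\gamma(\epsilon)\le 1/2$ for every $\epsilon\in(0,\epsilon_0)$; this is possible since $\gamma(\epsilon)\to 0$. Setting $r_k(\epsilon):=a_k(\epsilon)/b_k(\epsilon)-1$, the hypothesis forces
$$|r_k(\epsilon)|\le \sum_{j=1}^\infty |r_j(\epsilon)|\le \gamma(\epsilon)\le \tfrac12$$
for every $k$. In particular $1+r_k(\epsilon)>0$, so real logarithms are well-defined term by term, and the absolute convergence of $\sum_k |r_k(\epsilon)|$ justifies the interchange
$$\log\prod_{k=1}^\infty (1+r_k(\epsilon))=\sum_{k=1}^\infty \log(1+r_k(\epsilon)).$$

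Next, I would apply the elementary inequality $|\log(1+x)|\le 2|x|$, valid for $|x|\le \tfrac12$ (immediate from the Taylor series of $\log(1+x)$), to bound
$$|S_\epsilon|:=\left|\sum_{k=1}^\infty \log(1+r_k(\epsilon))\right|\le 2\sum_{k=1}^\infty |r_k(\epsilon)|\le 2\gamma(\epsilon)\le 1.$$
Since $|S_\epsilon|\le 1$, the companion inequality $|e^y-1|\le 2|y|$ for $|y|\le 1$ (again from Taylor expansion) then yields
$$\left|\prod_{k=1}^\infty \frac{a_k(\epsilon)}{b_k(\epsilon)}-1\right|=\bigl|e^{S_\epsilon}-1\bigr|\le 2|S_\epsilon|\le 4\gamma(\epsilon),$$
which is exactly the claimed bound.

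There is no real obstacle here; the whole argument is a two-step log--exp sandwich, with each stage contributing a factor of $2$ and accounting for the constant $4$ in the conclusion. The only care required is choosing $\epsilon_0$ small enough that both Taylor bounds apply on the relevant ranges, and this is automatic from $\gamma(\epsilon)\to 0$. Sharper constants could be obtained by refining the Taylor estimates, but they would play no role in the intended application, where the lemma is used merely to transfer a uniform-in-$\epsilon$ rate from the summed telescoping bound to the product appearing in the transition density expansion~\eqref{eq:3terms}.
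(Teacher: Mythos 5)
Your proof is correct and follows essentially the same route as the paper's: both pass to $\sum_k\log(a_k/b_k)$, bound it by $2\gamma(\epsilon)$ via $|\log(1+x)|\le 2|x|$, and return through $|e^y-1|\le 2|y|$ to get the factor $4$. The only cosmetic difference is that you pin down explicit ranges ($|x|\le 1/2$, $|y|\le 1$) for the two Taylor bounds, whereas the paper works with a generic small $\delta$ on which both inequalities hold and requires $\gamma(\epsilon)<\delta/2$.
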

\begin{proof}
Indeed, it suffices to consider $\delta\in(0,1)$ such that for all $x\in[-\delta,\delta]$, $|\log(1+x)|\leq 2|x|$ and $|e^{x}-1|\leq 2|x|$. For such a $\delta$, let $\epsilon_0$ be small enough such that for all  $\epsilon\in(0,\epsilon_0)$, $\gamma(\epsilon)<\delta/2$. Then, for all $\epsilon\in(0,\epsilon_0)$, 
\[
\abs{\sif k1\log\frac{a_k(\epsilon)}{b_k(\epsilon)}} \leq 2\sif k1\abs{\frac{a_k(\epsilon)}{b_k(\epsilon)}-1} \leq 2\gamma(\epsilon)\leq \delta, 
\]
and thus
\[
\abs{\prodd k1\infty \frac{a_k(\epsilon)}{b_k(\epsilon)}-1} = \abs{\exp\pp{\sif k1 \log\frac{a_k(\epsilon)}{b_k(\epsilon)}}-1} \leq 4\gamma(\epsilon). 
\]
\end{proof}
\begin{Lem}\label{lem:uniform_convergence}
For all $q\in(-1,1)$, $T_1,T_2\in(0,\infty)$ with $T_1<T_2$, 
\[
p_{0,t}\topp{q,\epsilon}(x,y)\sim p_{0,t}(x,y) \mbox{ uniformly for all } x\in[0,M_1], y\in[0,M_2], t\in[T_1,T_2]
\]
as $\epsilon\downarrow0$, 
with the convention $0/0 = 1$. 

\end{Lem}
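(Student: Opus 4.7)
The plan is to exploit the three-factor decomposition of $p_{0,t}\topp{q,\epsilon}(x,y)$ displayed in~\eqref{eq:3terms}, treating each factor separately to upgrade the pointwise limit from~\citep{bryc16local} to uniform convergence. Throughout I abbreviate $u_\epsilon := b_q^- + x\epsilon^2/\sqrt{1-q}$ and $v_\epsilon := b_q^- + y\epsilon^2/\sqrt{1-q}$, and use the key identity $(1-q)u_\epsilon v_\epsilon = 4 - 2(x+y)\epsilon^2 + xy\epsilon^4$ (which follows from $(1-q)(b_q^-)^2 = 4$). The prefactor $\epsilon^2(e^{-2\epsilon t};q)_\infty(q)_\infty/(2\pi)$ is dealt with by isolating $1-e^{-2\epsilon t} = 2\epsilon t(1+O(\epsilon))$ and applying dominated convergence on the logarithms to get $\prod_{k\ge 1}(1-e^{-2\epsilon t}q^k)\to (q)_\infty$ uniformly in $t\in[T_1,T_2]$; thus the prefactor is equivalent to $\epsilon^3 t(q)_\infty^2/\pi$ uniformly.

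For the $k=0$ factor I would use the identity
\[
\varphi_{q,0}(\delta,u,v) = e^{-2\delta}\bb{4\sinh^2\delta + (1-q)(u-v)^2 + 2(1-q)uv(1-\cosh\delta)}
\]
recalled in the proof of Lemma~\ref{lem:pst_upperbound} and Taylor expand in $\epsilon$. The crucial feature is that the $O(\epsilon^2)$ contributions from $4\sinh^2(\epsilon t)$ and $2(1-q)u_\epsilon v_\epsilon(1-\cosh(\epsilon t))$ cancel exactly thanks to the identity above, so the expansion must be pushed to order $\epsilon^4$. A direct but careful computation then yields
\[
\varphi_{q,0}(\epsilon t,u_\epsilon,v_\epsilon) = \epsilon^4\bb{t^4 + 2(x+y)t^2 + (x-y)^2}(1+O(\epsilon)),
\]
with the remainder uniform on the compact rectangle. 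Combined with $\sqrt{4-(2-y\epsilon^2)^2}\sim 2\epsilon\sqrt y$ uniformly for $y\in[0,M_2]$, the $k=0$ factor is equivalent to $2\sqrt y/(\epsilon^3\bb{t^4+2(x+y)t^2+(x-y)^2})$ uniformly.

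For the infinite product I would apply Lemma~\ref{lem:uniform} with $a_k(\epsilon):=\psi_{q,k}(v_\epsilon)/(1-q^k)^2$ and $b_k(\epsilon):=\varphi_{q,k}(\epsilon t,u_\epsilon,v_\epsilon)/(1-q^k)^4$, noting that $a_k(0)=b_k(0)=1$. From $(1-q)v_\epsilon^2 = 4 - 4y\epsilon^2 + y^2\epsilon^4$ one computes $\psi_{q,k}(v_\epsilon) - (1-q^k)^2 = q^k(4y\epsilon^2 - y^2\epsilon^4)$, so $\abs{\psi_{q,k}(v_\epsilon)-(1-q^k)^2}\le C_1|q|^k\epsilon^2$, and a similar linear-in-$\delta$ Taylor expansion of $\varphi_{q,k}$ about $(\delta,u,v) = (0, b_q^-, b_q^-)$ gives $\abs{\varphi_{q,k}(\epsilon t,u_\epsilon,v_\epsilon)-(1-q^k)^4}\le C_2|q|^k\epsilon$, uniformly on the compact set. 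Combined with the lower bound $\varphi_{q,k}\ge (1-|q|^k)^4$ from the proof of Lemma~\ref{lem:pst_upperbound}, the sum $\sum_k\abs{a_k(\epsilon)/b_k(\epsilon)-1}$ is dominated by $O(\epsilon)\sum_k|q|^k/(1-|q|^k)^4 = O(\epsilon)$, so Lemma~\ref{lem:uniform} forces $\prod_{k\ge 1}\psi_{q,k}(v_\epsilon)/\varphi_{q,k}(\epsilon t,u_\epsilon,v_\epsilon)\to 1/(q)_\infty^2$ uniformly. Multiplying the three pieces reproduces~\eqref{eq:Biane1/2} exactly. The principal obstacle is the delicate cancellation in the $k=0$ factor: a naive term-by-term bound on $\varphi_{q,0}$ predicts leading order $\epsilon^2$, whereas the true order is $\epsilon^4$, so one must Taylor expand carefully with remainder uniform on compact sets; the rest of the argument is routine once this expansion is in hand.
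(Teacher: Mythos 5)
Your proposal is correct and follows essentially the same route as the paper: the same three-factor decomposition of $p_{0,t}\topp{q,\epsilon}$, the same cancellation of the order-$\epsilon^2$ terms in $\varphi_{q,0}$ (which the paper packages into the identity $4\sinh^2\delta+8(1-\cosh\delta)=16\sinh^4(\delta/2)$), and the same use of Lemma~\ref{lem:uniform} for the infinite product. The only cosmetic difference is that you compare $\varphi_{q,k}$ to $(1-q^k)^4$ in a single application of Lemma~\ref{lem:uniform}, whereas the paper passes through the intermediate quantity $\wt\varphi_{q,k}(\epsilon t)$ in two steps; both are valid.
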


\begin{proof}
Recall~\eqref{eq:3terms}.
From now on, assume $x,y\in[0,M_2]$. The convergence of the first term on the right-hand side of~\eqref{eq:3terms} does not depend on $x$ nor $y$. For the second term, the numerator
$\sqrt{4-(2-y\epsilon^2)^2} \sim 2\epsilon\sqrt y$
as $\epsilon\downarrow0$, and the asymptotic equivalence is uniform for $y\in[0,M_2]$ (recall the convention $0/0 = 1$).  The denominator can be expressed as
\begin{multline*}
\varphi_{q,0}\pp{\epsilon t,b_q^-+\frac{x\epsilon^2}{\sqrt{1-q}}, b_q^-+\frac{y\epsilon^2}{\sqrt{1-q}}} 
= e^{-2\epsilon t}
\times \bigg[16\sinh^4\pp{\frac{\epsilon t}2} \\
+ (x-y)^2\epsilon^4 - 4(x+y)\epsilon^2(1-\cosh(\epsilon t)) + 2xy\epsilon ^4(1-\cosh (\epsilon t))\bigg],
\end{multline*}
which is asymptotically equivalent to 
\[
\epsilon^4\pp{t^4+(x-y)^2+2(x+y)t^2}
\]
as $\epsilon\downarrow 0$, uniformly for $x,y\in[0,M_2]$, $t\in[T_1,T_2]$. We have thus shown that the first two terms in~\eqref{eq:3terms} converges uniformly to $p_{0,t}(x,y)\cdot (q)_\infty^2$.

Next, we show the infinite product in~\eqref{eq:3terms} converges uniformly to 
\[
\prodd k1\infty\frac1{(1-q^k)^2} = \frac1{(q)_\infty^2}.
\]
For this purpose, we show
\equh\label{eq:infinite1}
\lim_{\epsilon\downarrow0}\prodd k1\infty \psi_{q,k}\pp{b_q^-+\frac{x\epsilon^2}{\sqrt{1-q}}} = \prodd k1\infty(1-q^k)^2
\eque
and
\equh\label{eq:infinite2}
\lim_{\epsilon\downarrow0}{\varphi_{q,k}\pp{\epsilon t,b_q^-+\frac{x\epsilon^2}{\sqrt{1-q}}, b_q^-+\frac{y\epsilon^2}{\sqrt{1-q}}}} = {(1-q^k)^4},
\eque
both uniformly for all $x,y\in[0,M_2]$, $t\in[0,T_2]$. Uniform convergence~\eqref{eq:infinite1}  follows from Lemma~\ref{lem:uniform} and the identity
\[
\abs{\frac{\psi_{q,k}(b_q^-+\frac{x\epsilon^2}{\sqrt{1-q}})}{ (1-q^k)^2}-1} =  \frac{q^k}{(1-q^k)^2}\abs{4x\epsilon^2-x^2\epsilon^4}.
\]
For the uniform convergence~\eqref{eq:infinite2},  consider
\[
\wt\varphi_{q,k}(\epsilon t):= (1-e^{-2\epsilon t}q^{2k})^2 - 4e^{-t\epsilon} q^k(1+e^{-2t\epsilon} q^{2k}) + 8 e^{-2\epsilon t}q^{2k}, k\in\N,
\]
and write, omitting the arguments for the sake of simplicity,
\equh\label{eq:infinite3}
\left.\prodd k1\infty\frac1{\varphi_{q,k}}\middle/\prodd k1\infty\frac1{(1-q^k)^4}\right. = \pp{\prodd k1\infty\frac1{\varphi_{q,k}}\middle/\prodd k1\infty\frac1{\wt\varphi_{q,k}}}  \cdot \pp{\prodd k1\infty\frac1{
\wt\varphi_{q,k}}/\prodd k1\infty\frac1{(1-q^k)^4}}.
\eque
To deal with the first term on the right-hand side of~\eqref{eq:infinite3}, one can show that there exists a constant $C$ such that
for all $x,y\in[0,M_2]$, 
\[
\bigg|\varphi_{q,k}\bigg(\epsilon t,b_q^-+\frac{x\epsilon^2}{\sqrt{1-q}},  b_q^-+\frac{y\epsilon^2}{\sqrt{1-q}}\bigg)  - \wt\varphi_{q,k}(\epsilon t)\bigg|
\leq Cq^k\pp{M_2\epsilon^2+M_2^2\epsilon^4}.
\]
So Lemma~\ref{lem:uniform} tells that
the first term on the right-hand side of~\eqref{eq:infinite3} tends to one uniformly.  For the second term, 
observe that
\[
\lim_{\epsilon\downarrow0}\wt\varphi_{q,k}(\epsilon t)= (1-q^{2k})^2 - 4q^k(1+q^{2k}) + 8 q^{2k} = (1-q^k)^4, k\in\N,
\]
and one can show similarly as above that
\[
\lim_{\epsilon\downarrow0} \prodd k1\infty\frac1{\wt\varphi_{q,k}(\epsilon t)}  = \prodd k1\infty \frac1{(1-q^k)^4}
\]
uniformly for $t\in[0,T_2]$. The proof is completed.
\end{proof}

  \begin{proof}[Proof of Theorem~\ref{thm:tangent}]
It suffices to prove~\eqref{eq:ethier}. Consider two constants $M_1, M_2>0$ to be determined later. Then,
\begin{multline*}
\sup_{x\in[0,4/\epsilon^2]}\abs{P_t\topp {q,\epsilon}f(x) - P_t\topp qf(x)}\\ \leq \sup_{x\in[0,4/\epsilon^2]}\abs{\int_0^{M_1}\pp{p_{0,t}\topp{q,\epsilon}(x,y) - p_{0,t}(x,y)}f(y)dy} + 2\sup_{y\ge M_1}|f(y)|.
\end{multline*}
For $f\in \what C(\R_+)$, the second term on the right-hand side above is arbitrarily small by taking $M_1$ sufficiently large. For the first term on the right-hand side above, it can be bounded from above by, for $\epsilon<(4/M_2)^{1/2}$,
\begin{multline*}
\nn f_\infty\sup_{x\in[0,M_2]}\int_0^{M_1}\abs{p_{0,t}\topp{q,\epsilon}(x,y) - p_{0,t}(x,y)}dy
\\
+ \nn f_\infty\sup_{x\in(M_2,4/\epsilon^2]}\bb{P\topp{q,\epsilon}_t(x,[0,M_1]) + P_t(x,[0,M_1])}.
\end{multline*}
\medskip

\noindent (i) For 
$P_t\topp{q,\epsilon}(x,[0,M_1]) = \int_{0}^{M_1} p_{0, t}\topp {q,\epsilon}(x,y)dy$, 
by Lemma~\ref{lem:pst_upperbound}, there exists a constant $C$ depending only on $q$ and $t$, such that
\[
p_{0,t}\topp {q,\epsilon}\pp{x,y}\leq C\frac{\sqrt{M_1}}{(M_2-M_1)^2} \mfa \epsilon>0,y\in[0,M_1],x\in(M_2,4/\epsilon^2].
\]
So
\[
\sup_{x\in(M_2,4/\epsilon^2]}P_t\topp{q,\epsilon}(x,[0,M_1]) \leq C\frac{M_1^{3/2}}{(M_2-M_1)^2}.\medskip
\]
(ii) For $\sup_{x\in(M_2,4/\epsilon^2]}P_t(x,[0,M_1])$, it is bounded from above by $2tM_1^{3/2}/(\pi(M_2-M_1)^2)$, by recalling~\eqref{eq:Biane1/2}.\medskip

\noindent (iii) Next, we prove
\equh\label{eq:uniform}
\lim_{\epsilon\downarrow0}\sup_{x\in[0,M_2]}\int_0^{M_1}\abs{p_{0,t}\topp{q,\epsilon}(x,y)-p_{0,t}(x,y)}dy = 0.
\eque
Since when $\epsilon$ is small enough,
$\sup_{x\in[0,M_2],y\in[0,M_1]}p_{0,t}(x,y) = 2\sqrt{M_1}/(\pi t^3)<\infty$, now~\eqref{eq:uniform} follows from Lemma~\ref{lem:uniform_convergence}. 
 To sum up, we have shown that, there exists a constant $C$ depending only on $q$ and $t$, such that
\[
\limsup_{\epsilon\downarrow0}\sup_{x\in[0,4/\epsilon^2]}\abs{P_t\topp {q,\epsilon}f(x) - P_tf(x)}\leq \frac{CM_1^{3/2}}{(M_2-M_1)^2}\nn f_\infty + \sup_{y>M_1}|f(y)|.
\]
Taking $M_2 = 2M_1$ and $M_1$ arbitrarily large, the desired result follows.
\end{proof}
\section{Asymptotic excursion probability}\label{sec:EVT}
The goal of this section is to establish the asymptotic excursion probability by Pickands' double-sum method \citep{pickands69asymptotic,piterbarg96asymptotic}. 

To define the so-called Pickands constant in this case, we first define
\[
H(T):={ \int_0^\infty\sqrt w\cdot \proba\pp{\inf_{t\in[0,T]}\Z_t^w<1}dw},
\]
where $\Z_t^w$ is the Markov process with transition density function~\eqref{eq:Biane1/2} starting from $w$. We first show that $H(T)<\infty$. 
For this purpose, we need the following lemma due to \citet{khoshnevisan97escape,xiao98asymptotic}.
\begin{Lem}\label{lem:KX}
Let $\{X_t\}_{t\ge 0}$ be a strong Markov process on $[0,\infty)$ with transition probability $P_t^X$. Then, for all constants $S,T$ such that $0\leq S<T<\infty$,
\[
\proba\pp{\inf_{t\in[S,T]}X^a_t\leq x} \leq \frac{\int_S^{2T-S}P_t^X(a,[0,x])dt}{\inf_{y\in[0,x]}\int_0^{T-S}P_t^X(y,[0,x])dt}.
\]
\end{Lem}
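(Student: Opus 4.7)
The plan is to exploit the strong Markov property via a standard first-hitting-time argument, comparing a whole expected occupation time (the numerator) with the expected occupation time starting at the first hit of $[0,x]$ (a lower bound).

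Concretely, first I would introduce the stopping time $\tau := \inf\ccbb{t\in[S,T] : X_t\le x}$ with the convention $\inf\emptyset = +\infty$, so that the event of interest can be rewritten as $\ccbb{\inf_{t\in[S,T]} X_t\le x} = \{\tau\le T\}$. Using right-continuity of the paths and the fact that $\tau$ is the hitting time of the closed set $[0,x]$, we have $X_\tau\in[0,x]$ on $\{\tau\le T\}$. By Tonelli's theorem the numerator can be expressed as an expected occupation time:
\[
\int_S^{2T-S} P_t^X(a,[0,x])\,dt = \esp\int_S^{2T-S}\ind_{\{X_t\le x\}}\,dt.
\]

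Next I would lower bound this quantity by restricting the time integral to $[\tau,\tau+(T-S)]$, which is contained in $[S,2T-S]$ whenever $\tau\le T$ (since $\tau\ge S$ and $\tau+(T-S)\le 2T-S$):
\[
\esp\int_S^{2T-S}\ind_{\{X_t\le x\}}\,dt \ge \esp\bb{\ind_{\{\tau\le T\}}\int_\tau^{\tau+(T-S)}\ind_{\{X_t\le x\}}\,dt}.
\]
Applying the strong Markov property at $\tau$ and a time change $t\mapsto \tau+t$, the inner integral on $\{\tau\le T\}$ equals, in conditional expectation given $\calF_\tau$,
\[
\int_0^{T-S}P_t^X(X_\tau,[0,x])\,dt \ge \inf_{y\in[0,x]}\int_0^{T-S}P_t^X(y,[0,x])\,dt,
\]
where the inequality uses $X_\tau\in[0,x]$ on $\{\tau\le T\}$. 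Taking expectations and combining with the previous displays gives
\[
\int_S^{2T-S}P_t^X(a,[0,x])\,dt \ge \proba(\tau\le T)\cdot \inf_{y\in[0,x]}\int_0^{T-S}P_t^X(y,[0,x])\,dt,
\]
and rearranging yields the claim, provided the infimum in the denominator is positive (otherwise the inequality is trivial).

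The only genuinely delicate step is the strong Markov reduction: one must ensure both that $\tau$ is a legitimate stopping time (which follows from being the hitting time of a closed set for a càdlàg strong Markov process) and that $X_\tau\in[0,x]$ almost surely on $\{\tau\le T\}$, which is what allows the pointwise bound by the infimum over starting points in $[0,x]$. Everything else is bookkeeping in the time variable to confirm that the shifted integration interval $[\tau,\tau+(T-S)]$ still sits inside $[S,2T-S]$, which is exactly why the constant $2T-S$ appears in the upper limit of the numerator.
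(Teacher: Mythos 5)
Your argument is correct and is essentially identical to the paper's proof: both define the hitting time $\tau$ of $[0,x]$ after time $S$, bound the expected occupation time of $[0,x]$ over $[S,2T-S]$ from below by the occupation over $[\tau,\tau+(T-S)]$ on $\{\tau\le T\}$, and apply the strong Markov property together with $X_\tau\in[0,x]$ to extract the infimum over starting points. The only difference is that you spell out the right-continuity and containment bookkeeping that the paper leaves implicit.
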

\begin{proof}
Set $\tau_x^a:=\inf\{t\ge S: X^a_t\leq x\}$. Then,
\begin{align*}
\int_S^{2T-S}P_t^X(a,[0,x])dt & = \esp \pp{\int_S^{2T-S}\indd{X_t\in[0,x]}dt}\\
&  \ge \esp \pp{\int_S^{2T-S}\indd{X_t\in[0,x]}dt\cdot \indd{\tau_x\le T}} 
\\
& \ge \esp \bb{\indd{\tau_x\le T}\esp\pp{\int_{\tau_x}^{\tau_x+T-S}\indd{X_t\in[0,x]}dt\mmid \tau_x}}\\
& \ge \proba(\tau_x\le T)\inf_{y\in[0,x]}\int_0^{T-S}P_t^X(y,[0,x])dt.
\end{align*}
\end{proof}
By this lemma, 
\[
\proba\pp{\inf_{t\in[0,T]}\Z_t^w\leq 1} \leq \frac{\int_0^{2T}P_t(w,[0,1])dt}{\inf_{y\in[0,1]}\int_0^TP_t(y,[0,1])dt}.
\]
By the density formula of $p_{s,t}$ in~\eqref{eq:Biane1/2}, for $w\ge 2$, the numerator is bounded from above by $8T/(\pi w^2)$.  For any $\delta\in(0,T\wedge 1)$, the denominator is bounded from below by
\equh\label{eq:infp}
\inf_{y\in[0,1]}\int_\delta^TP_t(y,[\delta,1])dt \ge (T-\delta)(1-\delta)\inf_{y\in[0,1],z\in[\delta,1],t\in[\delta,T]}p_{0,t}(y,z)>0.
\eque So we have shown that there exists a constant $C$ such that
\equh\label{eq:infZ}
\proba\pp{\inf_{t\in[0,T]}\Z_t^w\leq 1} \leq \frac C{w^2}\wedge 1, \mfa w\ge 0,
\eque
whence $H(T)<\infty$ for all $T>0$.

The main result of this section is the following. 
\begin{Thm}\label{thm:doublesum}
For all $q\in(-1,1)$, $L>0$,
\[
u_L\topp {q,\epsilon}:= \proba\pp{\inf_{t\in[0,L]}X_t\topp q<b_q^-+\frac{\epsilon^2}{\sqrt{1-q}}}
\sim \epsilon^2\frac{(q)_\infty^3}{\pi} \cdot LH
\]
as $\epsilon\downarrow 0$,
where
\equh\label{eq:H}
H := \lim_{T\to\infty}\frac{H(T)}T
\eque
is a well-defined, strictly positive and finite constant. 
\end{Thm}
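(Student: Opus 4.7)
The plan is to apply Pickands' double-sum method. Partition $[0,L]$ into $N = \lfloor L/(\epsilon T)\rfloor$ sub-intervals $I_k = [k\epsilon T, (k+1)\epsilon T]$ of length $\epsilon T$, where $T$ is a large auxiliary parameter to be sent to infinity after $\epsilon\downarrow 0$, and set $A_k = \{\inf_{t\in I_k} X_t\topp q < b_q^- + \epsilon^2/\sqrt{1-q}\}$. Bonferroni gives
\[
\sum_{k=0}^{N-1}\proba(A_k) - \sum_{0\le k<l\le N-1}\proba(A_k\cap A_l) \le u_L\topp{q,\epsilon} \le \sum_{k=0}^{N-1}\proba(A_k),
\]
so it suffices to establish (i) $\sum_k \proba(A_k) \sim \epsilon^2 L(q)_\infty^3 H(T)/(\pi T)$ as $\epsilon\downarrow 0$, (ii) the double sum is $o(\epsilon^2)$ under the iterated limit ($\epsilon\downarrow 0$, then $T\to\infty$), and (iii) $H(T)/T\to H\in(0,\infty)$.

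For (i), by stationarity $\proba(A_k)=\proba(A_0)$, and via the transformation $\wt X\topp{q,\epsilon}$ we obtain $\proba(A_0)=\int_0^\infty p\topp{q,\epsilon}(w)\,\proba(\inf_{t\in[0,T]}\wt X_t\topp{q,\epsilon}<1\mid \wt X_0\topp{q,\epsilon}=w)\,dw$. A Taylor expansion of the $q$-normal density at the boundary (using $\sqrt{4-(1-q)x^2}\sim 2\epsilon\sqrt{w}$ at $x=b_q^-+w\epsilon^2/\sqrt{1-q}$ together with the identity $\prod_{k\ge 1}[(1+q^k)^2-4q^k]=(q)_\infty^2$) yields $p\topp{q,\epsilon}(w)/\epsilon^3\to (q)_\infty^3\sqrt{w}/\pi$ pointwise. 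Theorem~\ref{thm:tangent}, combined with the observation that $\{\omega:\inf_{t\in[0,T]}\omega(t)<1\}$ is a continuity set for the law of $\Z^w$ (its boundary $\{\inf=1\}$ is $\Z^w$-null by absolute continuity of the marginals), produces the conditional convergence $\proba(\inf_{t\in[0,T]}\wt X\topp{q,\epsilon}_t<1\mid \wt X_0\topp{q,\epsilon}=w)\to \proba(\inf_{t\in[0,T]}\Z_t^w<1)$. A uniform-in-$\epsilon$ tail bound of order $C_T/w^2$ for large $w$, obtained by applying Lemma~\ref{lem:KX} to $\wt X\topp{q,\epsilon}$ and controlling its transition density denominator via Lemma~\ref{lem:uniform_convergence}, then permits dominated convergence and gives $\proba(A_0)\sim \epsilon^3(q)_\infty^3 H(T)/\pi$. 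Summing over $N\sim L/(\epsilon T)$ intervals delivers the single-sum asymptotic.

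For (iii), the infinite measure $\sqrt{w}\,dw$ is invariant for $\Z$ because the detailed-balance identity $\sqrt{x}\,p_{s,t}(x,y)=\sqrt{y}\,p_{s,t}(y,x)$ is immediate from~\eqref{eq:Biane1/2}. The Markov property gives $\int_0^\infty \sqrt{w}\,\proba(\inf_{t\in[T_1,T_1+T_2]}\Z_t^w<1)\,dw=H(T_2)$, whence a union bound yields subadditivity $H(T_1+T_2)\le H(T_1)+H(T_2)$. Fekete's lemma produces $H=\inf_T H(T)/T\in[0,\infty)$, with finiteness automatic since $H(T)<\infty$ was already shown. Strict positivity of $H$ I would deduce from a matching lower bound $u_L\topp{q,\epsilon}\ge c_L\epsilon^2$, obtained from the Bonferroni lower bound applied to many disjoint short intervals together with the mixing estimate from (ii); such a lower bound is incompatible with $H=0$.

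The principal technical obstacle is (ii), the double-sum estimate. Writing $\sum_{k<l}\proba(A_k\cap A_l)=\sum_{m\ge 1}(N-m)\proba(A_0\cap A_m)$ by stationarity, I would split the sum at a threshold $m^\ast$: for $m\ge m^\ast$ the Markov property combined with uniform mixing-type decay of $\wt X\topp{q,\epsilon}$ should give $\proba(A_0\cap A_m)\le C\proba(A_0)^2=O(\epsilon^6)$ with total contribution $O(N^2\epsilon^6)=o(\epsilon^2)$, while for small $m$ the fact that conditionally on $A_0$ the tangent process $\Z^w$ drifts away from the boundary (Proposition~\ref{prop:transience}) keeps the probability of a second low excursion within a short window small. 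The small-$m$ case will be the most delicate because the jump structure of $X\topp q$ (locally Cauchy near the boundary, cf.\ the introduction) makes return probabilities less tame than in the continuous Gaussian setting, and a quantitative two-point density estimate uniform in $\epsilon$ will likely be required.
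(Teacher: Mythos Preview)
Your outline follows the same double-sum architecture as the paper, and parts (i) and the domination argument are essentially identical. Two points deserve comment.

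First, your treatment of (iii) via subadditivity and Fekete is genuinely different from the paper and in fact more elegant. The detailed-balance identity $\sqrt{x}\,p_{s,t}(x,y)=\sqrt{y}\,p_{s,t}(y,x)$ indeed yields
\[
\int_0^\infty \sqrt{w}\,\proba\Bigl(\inf_{t\in[T_1,T_1+T_2]}\Z_t^w<1\Bigr)\,dw = H(T_2),
\]
so the union bound gives $H(T_1+T_2)\le H(T_1)+H(T_2)$ and Fekete immediately produces the limit $H=\inf_T H(T)/T$. The paper instead extracts both existence and positivity of $H$ a posteriori from the two-sided inequality
\[
\frac{H(S)}{S}-C\,\frac{H(S)^2}{S^4}\;\le\;\liminf_{\epsilon\downarrow 0}\frac{u_L^{(q,\epsilon)}}{\epsilon^2 L(q)_\infty^3/\pi}\;\le\;\limsup_{\epsilon\downarrow 0}\cdots\;\le\;\frac{H(T)}{T},
\]
letting $S,T\to\infty$. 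Your route gives existence more cleanly; the paper's route gives positivity for free from the same display (since $H(S)/S\cdot(1-C/S^2)>0$ for large $S$), whereas your positivity argument is a bit circular as stated.

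Second, for the double sum (ii) you anticipate a delicate split at a threshold $m^\ast$ with separate ``mixing'' and ``transience'' arguments. The paper avoids this entirely. The key observation you are missing is twofold: one conditions on the \emph{endpoint} $\wt X_T^{(q,\epsilon)}=w$ (using reversibility of the stationary process) to factor $\proba(A_0\cap A_m)$ into two one-sided pieces, and then the crude bound $p_{0,t}(w,y)\le 2\sqrt{y}/(\pi t^3)$ immediately gives $\proba(\inf_{[iT,(i+1)T]}\Z^w<1)\le 2H(T)/(\pi i^3 T^3)$, summable in $i$ uniformly in $w$. This yields directly $\sum_{k\ne l}\proba(A_k\cap A_l)\sim \epsilon^2\cdot O(H(T)^2/T^4)$, with no case distinction and no appeal to mixing or local Cauchy structure. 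Your small-$m$ worries are unfounded: the $1/i^3$ decay handles all gaps at once.
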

The constant $H$ is the so-called Pickands constant in this case. 

Throughout we fix $L>0$. It is convenient to work with $\wt X\topp{q,\epsilon}$. So we write
\[
u_L\topp {q,\epsilon} \equiv  \proba\pp{\inf_{t\in[0,L]}\wt X_t\topp{q,\epsilon}<1},
\]
and introduce
\begin{multline*}
A_{i,T}\topp {q,\epsilon}:=\ccbb{\inf_{t\in((i-1)T\epsilon,iT\epsilon]}X_t\topp q<b_q^-+\frac{\epsilon^2}{\sqrt{1-q}}}
\equiv\ccbb{\inf_{t\in((i-1)T,iT]}\wt X_{t}\topp{q,\epsilon}<1} 
\end{multline*}
for $\epsilon,T>0, i\in\N$. 
The idea of the double-sum method is to observe, for $N_{T,\epsilon} = \sfloor{L/(T\epsilon)}$,
\equh
\label{eq:doublesum}
\summ i1{N_{T,\epsilon}}\proba\pp{A_{i,T}\topp {q,\epsilon}}
- \sum_{i=1}^{N_{T,\epsilon}}\sum_{\substack{j=1\\j\neq i}}^{N_{T,\epsilon}}\proba\pp{A_{i,T}\topp {q,\epsilon}\cap A_{j,T}\topp {q,\epsilon}} \leq u_L\topp {q,\epsilon}\leq \summ i1{N_{T,\epsilon}+1}\proba\pp{A_{i,T}\topp {q,\epsilon}}.
\eque
Now, we start with two lemmas on the limits of the two summands above. 
\begin{Lem}\label{lem:doublesum1}For all $q\in(-1,1)$, $T>0$,
\[
\proba\pp{A_{1,T}\topp {q,\epsilon}} \sim    \epsilon^3\cdot \frac{(q)_\infty^3}\pi\int_0^\infty \sqrt w\cdot \proba\pp{\inf_{t\in[0,T]}\Z_t^w<1}dw
\]
as $\epsilon\downarrow0$.
\end{Lem}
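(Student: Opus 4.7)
The plan is to condition on $\wt X_0\topp{q,\epsilon}=w$ and write
\[
\proba\spp{A_{1,T}\topp{q,\epsilon}} = \int_0^{4/\epsilon^2} p\topp{q,\epsilon}(w)\, g_T\topp{q,\epsilon}(w)\, dw,
\]
with $g_T\topp{q,\epsilon}(w):= \proba\spp{\inf_{t\in[0,T]}\wt X_t\topp{q,\epsilon}<1\mmid \wt X_0\topp{q,\epsilon}=w}$ and $g_T(w):= \proba(\inf_{t\in[0,T]}\Z_t^w<1)$, and then show that $\epsilon^{-3}$ times this integral converges to $\frac{(q)_\infty^3}{\pi}\int_0^\infty\sqrt w\, g_T(w)\, dw$. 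Two ingredients drive the argument, followed by a tail-truncation.

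First, expanding $f\topp q$ near $b_q^-$: setting $x = b_q^- + w\epsilon^2/\sqrt{1-q}$ gives $4-(1-q)x^2 = w\epsilon^2(4-w\epsilon^2)$, so $\sqrt{4-(1-q)x^2}\sim 2\sqrt w\,\epsilon$, while $(1+q^k)^2-(1-q)x^2q^k\to(1-q^k)^2$ pointwise, with the infinite product handled by Lemma~\ref{lem:uniform} in the same spirit as in the proof of Lemma~\ref{lem:uniform_convergence}. This yields the pointwise asymptotic $\epsilon^{-3}p\topp{q,\epsilon}(w)\to\frac{(q)_\infty^3}{\pi}\sqrt w$ together with a uniform majorant $p\topp{q,\epsilon}(w)\le C_q\epsilon^3\sqrt w$ on $w\in[0,4/\epsilon^2]$, using that $\prod_{k\ge 1}[(1+q^k)^2-(1-q)x^2q^k]$ stays bounded for $|x|\le 2/\sqrt{1-q}$. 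Second, Theorem~\ref{thm:tangent} gives weak convergence of $\wt X\topp{q,\epsilon}$ to $\Z^w$ in $D([0,T])$ under $\proba(\cdot\mid \wt X_0\topp{q,\epsilon}=w)$; since Skorohod time-changes are bijections of $[0,T]$, the functional $\xi\mapsto \inf_{t\in[0,T]}\xi(t)$ is continuous on $D([0,T])$, and combined with continuity of the law of $\inf_{[0,T]}\Z_t^w$ at $1$ (inherited from the smoothness of $p_{s,t}$), one obtains $g_T\topp{q,\epsilon}(w)\to g_T(w)$ for every $w\ge 0$.

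To pass to the limit inside the integral, split it at $M\ge 2$. On $[0,M]$, dominated convergence with dominator $C_q\sqrt w$ (times the bound $g_T\topp{q,\epsilon}\le 1$) gives $\epsilon^{-3}\int_0^M p\topp{q,\epsilon} g_T\topp{q,\epsilon}\, dw\to\frac{(q)_\infty^3}{\pi}\int_0^M\sqrt w\,g_T(w)\,dw$. For the tail $[M,4/\epsilon^2]$, since $\wt X\topp{q,\epsilon}$ is Feller and hence strong Markov, Lemma~\ref{lem:KX} applies: Lemma~\ref{lem:pst_upperbound} makes its numerator $O(T^2/w^2)$ uniformly in small $\epsilon$ when $w\ge 2$, and Lemma~\ref{lem:uniform_convergence}, applied on a compact box such as $[0,1]\times[\delta,1]\times[\delta,T]$ for a suitable $\delta>0$, together with strict positivity of $p_{0,t}(y,z)$ there, bounds the denominator below by a positive constant for all sufficiently small $\epsilon$. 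This produces the $\epsilon$-uniform estimate $g_T\topp{q,\epsilon}(w)\le C/w^2$, whence
\[
\epsilon^{-3}\int_M^{4/\epsilon^2} p\topp{q,\epsilon}(w)\,g_T\topp{q,\epsilon}(w)\,dw \le C_q C\int_M^\infty w^{-3/2}\,dw = O(M^{-1/2}),
\]
and the limiting tail $\int_M^\infty\sqrt w\,g_T(w)\,dw$ obeys the same estimate via \eqref{eq:infZ}. Sending $\epsilon\downarrow 0$ and then $M\to\infty$ finishes the proof.

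The main obstacle is the uniform denominator bound in the tail estimate: Lemma~\ref{lem:uniform_convergence} only supplies uniform convergence on compacts, so one must choose the compact box carefully and then exploit strict positivity of the limit density $p_{0,t}(y,z)$ on that box to extract an $\epsilon$-uniform positive lower bound on the denominator appearing in Lemma~\ref{lem:KX}; without this step, one cannot absorb the mass at large $w\ge M$ that remains uncontrolled by the pointwise convergence to the tangent process.
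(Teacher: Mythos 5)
Your proposal is correct and follows essentially the same route as the paper: the Markov-property decomposition $\int p\topp{q,\epsilon}(w)\,g_T\topp{q,\epsilon}(w)\,dw$, the pointwise asymptotics $\epsilon^{-3}p\topp{q,\epsilon}(w)\to (q)_\infty^3\sqrt w/\pi$ and $g_T\topp{q,\epsilon}(w)\to g_T(w)$ via Theorem~\ref{thm:tangent}, and the $\epsilon$-uniform bound $g_T\topp{q,\epsilon}(w)\le C/w^2$ from Lemma~\ref{lem:KX} with numerator and denominator controlled by Lemmas~\ref{lem:pst_upperbound} and~\ref{lem:uniform_convergence}. The only cosmetic difference is that you truncate at $M$ and estimate the tail separately, whereas the paper combines the two bounds into the single integrable dominator $C\epsilon^3(w^{-3/2}\wedge 1)$ and applies dominated convergence once.
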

\begin{proof}
By the Markov property, we write
\[
\proba\pp{A_{1,T}\topp {q,\epsilon}}  
 = \int_0^\infty p\topp {q,\epsilon}(w) \proba\pp{A_{1,T}\topp {q,\epsilon}\mmid \wt X_0\topp {q,\epsilon} = w}dw,
\]
keeping in mind that the integrand is zero for $w>4/\epsilon^2$. 
Introduce
\[
s\topp q_{T,\epsilon}(w):= \proba\pp{A_{1,T}\topp {q,\epsilon}\mmid \wt X_0\topp{q,\epsilon} = w}.
\]
The goal is to show
\[
\int_0^{\infty}p\topp {q,\epsilon}(w)s_{T,\epsilon}\topp q(w)dw  \sim\epsilon ^3\frac{(q)_\infty^3}\pi\int_0^\infty \sqrt w\cdot \proba\pp{\inf_{t\in[0,T]}\Z_t^w<1}dw,
\]
as $\epsilon\downarrow0$,
by applying the dominated convergence theorem. The pointwise convergence is straightforward: we have
\begin{multline*}
p\topp {q,\epsilon}(w) = \epsilon^2\frac{(q)_\infty}{2\pi}\sqrt{4-(2-w\epsilon^2)^2}\prodd k1\infty \psi_{q,k}\pp{b_q^-+\frac{w\epsilon^2}{\sqrt{1-q}}}\cdot \indd{w\in[0,4/\epsilon^2]}\\
\sim \frac{(q)_\infty^3}{\pi}\sqrt w\epsilon^3,
\end{multline*}
and 
\[
\lim_{\epsilon\downarrow0}
s\topp {q,\epsilon}_{T}(w)= 
\proba\pp{\inf_{t\in[0,T]}\Z^w_t<1}
\]
by  Theorem~\ref{thm:tangent} and the continuous mapping theorem. 

We now find an integrable upper bound for $p\topp {q,\epsilon}s\topp{q,\epsilon}_T$.
For $p\topp {q,\epsilon}$, observe that for some constant $C$,
\equh\label{eq:upper_pq}
p\topp {q,\epsilon}(w)\leq \epsilon^2\frac{(q)_\infty}{2\pi}\sqrt{4-(2-w\epsilon^2)^2}\prodd k1\infty (1+|q|^k)^2\cdot\indd{w\in[0,4/\epsilon^2]}\leq C \sqrt w\epsilon^3.
\eque
For an upper bound of $s\topp{q,\epsilon}_T$, observe that 
by Lemma~\ref{lem:KX}
\equh\label{eq:KX}
s\topp{q,\epsilon}_T(w)\leq \frac{\int_0^{2T}P_t\topp {q,\epsilon}(w,[0,1])dt}{\inf_{y\in[0,1]}\int_0^T P_t\topp{q,\epsilon}(y,[0,1])dt}.
\eque
We shall derive an upper bound of $s\topp{q,\epsilon}_T(w)$ from here for $w\ge 2$ first. For the numerator of the right-hand side of~\eqref{eq:KX},
we have for some constant $C$, 
\[
\int_0^{2T}P_t\topp{q,\epsilon}(w,[0,1])dt\leq 2T \sup_{\substack{y\in[0,1]\\t\in[0,2T]}}p_{0,t}\topp{q,\epsilon}(w,y)\leq \frac C{w^2},\mfa w\ge 2,
\]
where the last inequality follows from~\eqref{eq:upper_pstq}.

For a lower bound of the denominator on the right-hand side of~\eqref{eq:KX}, we use, for some $\delta\in(0,(T\wedge1))$,
\begin{multline*}
\inf_{y\in[0,1]}\int_0^T P_t\topp{q,\epsilon}(y,[0,1])dt \ge 
\inf_{y\in[0,1]}\int_\delta^T P_t\topp{q,\epsilon}(y,[\delta,1])dt \\
\ge
(T-\delta)(1-\delta) \inf_{y\in[0,1],z\in[\delta,1], t\in[\delta,T]} p_{0,t}\topp{q,\epsilon}(y,z).
\end{multline*}
The last term above is strictly positive for $\epsilon$ small enough, again by Lemma~\ref{lem:uniform_convergence} and~\eqref{eq:infp}. So we have shown 
\equh\label{eq:sTq}
s_T\topp{q,\epsilon}(w)\leq \frac C{w^2}\wedge1 \mfa w>0.
\eque

To sum up, we have shown that there exists constant $C$ such that for $\epsilon$ small enough, 
\equh\label{eq:bound_rqsq}
p\topp {q,\epsilon}(w)s_{T}\topp {q,\epsilon}(w)\leq C\epsilon^3\pp{\frac 1{w^{3/2}}\wedge 1}, \mfa w\ge 0.
\eque
Therefore the dominated convergence theorem yields the desired result.
\end{proof}

\begin{Lem}
For all $q\in(-1,1)$, $T>0$,
\begin{align*}
\lim_{\epsilon\downarrow0} & \frac1{\epsilon^2}\sum_{i=1}^{N_{T,\epsilon}}\sum_{\substack{j=1\\j\neq i}}^{N_{T,\epsilon}} \proba\pp{A_{i,T}\topp {q,\epsilon}\cap A_{j,T}\topp {q,\epsilon}}\\
& 
= L\cdot\frac{(q)_\infty^3}{\pi T}\int_0^\infty \sqrt w\cdot \proba\pp{\inf_{t\in[0,T]}\Z_t^w<1}{\sif i1\proba\pp{\inf_{t\in[iT,(i+1)T]}\Z_t^w<1}}dw \\
& \le L\cdot\frac{H(T)^2}{T^4} \cdot\frac{2(q)_\infty^3}{\pi^2} \sif i1\frac1{i^3}.
\end{align*}
\end{Lem}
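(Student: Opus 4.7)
The approach is to exploit the stationarity of $\wt X\topp{q,\epsilon}$ and the Markov property to reduce the double sum to a single sum in the lag $k=|i-j|$, and then pass to the limit via dominated convergence. By stationarity $\proba(A_{i,T}\topp{q,\epsilon}\cap A_{j,T}\topp{q,\epsilon})$ depends only on $k$, so the double sum rearranges as
\[
\summ i1{N_{T,\epsilon}}\sum_{\substack{j=1\\j\ne i}}^{N_{T,\epsilon}}\proba\pp{A_{i,T}\topp{q,\epsilon}\cap A_{j,T}\topp{q,\epsilon}} = 2\summ k1{N_{T,\epsilon}-1}(N_{T,\epsilon}-k)\proba\pp{A_{1,T}\topp{q,\epsilon}\cap A_{1+k,T}\topp{q,\epsilon}}.
\]
Since $N_{T,\epsilon}\sim L/(T\epsilon)$, after division by $\epsilon^2$ the task reduces to identifying the limit of $\epsilon^{-3}\proba(A_{1,T}\topp{q,\epsilon}\cap A_{1+k,T}\topp{q,\epsilon})$ pointwise in $k$ and producing a summable majorant uniform in small $\epsilon$.

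For the pointwise limit I would condition on $\wt X_0\topp{q,\epsilon}=w$ and invoke Theorem~\ref{thm:tangent} together with the continuous mapping theorem on the infimum functional (each set $\{x\in D:\inf_I x<1\}$ is open in the $J_1$-topology on compact $I$); combined with the asymptotic $p\topp{q,\epsilon}(w)/\epsilon^3\to (q)_\infty^3\sqrt w/\pi$ established in the proof of Lemma~\ref{lem:doublesum1}, this yields
$\epsilon^{-3}\proba(A_{1,T}\topp{q,\epsilon}\cap A_{1+k,T}\topp{q,\epsilon})\to (q)_\infty^3/\pi \int_0^\infty\sqrt w\,\proba(\inf_{[0,T]}\Z^w<1,\inf_{[kT,(k+1)T]}\Z^w<1)\,dw$.
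The product-form integrand on the middle line of the display is then obtained as an upper bound on this joint probability via the Markov property of $\Z$ together with $\indd{A_1}\le 1$ applied at the appropriate stage.

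The summable majorant needed for dominated convergence is obtained from the Markov property at time $T$,
\[
\proba_w\pp{A_{1,T}\topp{q,\epsilon}\cap A_{1+k,T}\topp{q,\epsilon}} = \esp_w\bb{\indd{A_{1,T}\topp{q,\epsilon}}\,h_k^\epsilon\pp{\wt X_T\topp{q,\epsilon}}},\qquad h_k^\epsilon(y):=\proba_y\pp{\inf_{((k-1)T,kT]}\wt X\topp{q,\epsilon}<1}.
\]
Iterating, $h_k^\epsilon(y)=\esp_y[s_T\topp{q,\epsilon}(\wt X_{(k-1)T}\topp{q,\epsilon})]$; the transition-density estimate~\eqref{eq:upper_pstq} combined with $\sinh x\ge x$ and the constraint $\epsilon k T\le L$ gives $p_{0,(k-1)T}\topp{q,\epsilon}(y,z)\le C\sqrt z/((k-1)T)^3$ for $\epsilon$ small, and together with~\eqref{eq:sTq} this yields $h_k^\epsilon(y)\le C/k^3$ uniformly in $y$ and in small $\epsilon$ for $k\ge 2$. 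Together with~\eqref{eq:upper_pq} and Lemma~\ref{lem:doublesum1}, this gives $\epsilon^{-3}\proba(A_{1,T}\topp{q,\epsilon}\cap A_{1+k,T}\topp{q,\epsilon})\le C/k^3$, summable in $k$.

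The final inequality uses the density~\eqref{eq:Biane1/2}: since $(y-w)^2+2(y+w)(iT)^2\ge 0$, one has $p_{0,iT}(w,y)\le 2\sqrt y/(\pi (iT)^3)$ uniformly in $w,y$, so by the Markov property of $\Z$,
\[
\proba\pp{\inf_{t\in[iT,(i+1)T]}\Z_t^w<1}=\int_0^\infty p_{0,iT}(w,y)\,\proba\pp{\inf_{[0,T]}\Z^y<1}dy\le \frac{2H(T)}{\pi T^3 i^3},
\]
uniformly in $w$. Multiplying by $\sqrt w\,\proba(\inf_{[0,T]}\Z^w<1)$, integrating in $w$, and summing in $i$ produces the stated bound. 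The main obstacle is the uniform-in-$\epsilon$, summable-in-$k$ estimate on $\epsilon^{-3}\proba(A_{1,T}\topp{q,\epsilon}\cap A_{1+k,T}\topp{q,\epsilon})$, which requires a careful application of Lemma~\ref{lem:pst_upperbound} at time scales $kT$ growing like $1/\epsilon$, and uniform control of the constants appearing in~\eqref{eq:sTq} as $\epsilon\to 0$.
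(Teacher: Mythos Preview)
Your overall strategy---reduce by stationarity to a single sum in the lag $k$, identify the pointwise limit by conditioning and Theorem~\ref{thm:tangent}, and control the sum via a $k^{-3}$ majorant from Lemma~\ref{lem:pst_upperbound}---matches the paper's. The majorant argument and the final inequality are essentially as in the paper.

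There is, however, a genuine gap in your treatment of the \emph{equality} on the middle line. You condition on $\wt X_0\topp{q,\epsilon}=w$ and obtain, in the limit, the \emph{joint} probability
\[
\int_0^\infty \sqrt w\;\proba\pp{\inf_{[0,T]}\Z^w<1,\ \inf_{[kT,(k+1)T]}\Z^w<1}\,dw,
\]
and then assert that ``the product-form integrand \dots\ is obtained as an upper bound via the Markov property together with $\ind_{A_1}\le 1$.'' That step is both unclear and insufficient. First, applying the Markov property at time $T$ and then $\ind_{A_1}\le 1$ yields $\proba_w(\inf_{[kT,(k+1)T]}\Z<1)$, not the product $\proba_w(\inf_{[0,T]}\Z<1)\cdot\proba_w(\inf_{[kT,(k+1)T]}\Z<1)$; there is no obvious way to recover the first factor after discarding the indicator. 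Second, even if you could bound the joint probability by the product, the lemma asserts an \emph{equality} with the product-form integral, not an inequality.

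The missing idea is \emph{reversibility}. The paper conditions instead on the value at time $T$, the right endpoint of the first interval: given $\wt X_T\topp{q,\epsilon}=w$, the events $A_{1,T}\topp{q,\epsilon}$ (past) and $A_{1+k,T}\topp{q,\epsilon}$ (future) are conditionally independent by the Markov property, and since the stationary process $\wt X\topp{q,\epsilon}$ is reversible, the backward conditional law from time $T$ coincides with the forward one starting at $w$. This yields an \emph{exact} factorization
\[
\proba\pp{A_{1,T}\topp{q,\epsilon}\cap A_{1+k,T}\topp{q,\epsilon}}=\int_0^\infty p\topp{q,\epsilon}(w)\,s_T\topp{q,\epsilon}(w)\,s_{k-1,T}\topp{q,\epsilon}(w)\,dw,
\]
from which the product-form limit follows directly. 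Equivalently, one can pass to the limit first and then invoke reversibility of $\Z$ with respect to its invariant measure $w^{1/2}dw$ to rewrite your joint-probability integral as the product integral; but either way, reversibility is the key ingredient your sketch omits.
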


\begin{proof}
We  start by showing that the summation of infinite probabilities is finite. For this, by the Markov property,
\begin{multline*}
\proba\pp{\inf_{t\in[iT,(i+1)T]}\Z_t^w<1} = \int_0^\infty p_{0,iT}(w,y)\proba\pp{\inf_{t\in[0,T]}\Z_t^y<1}dy \\
\leq \frac2{\pi (iT)^3}\int_0^\infty\sqrt y\cdot \proba\pp{\inf_{t\in[0,T]}\Z_t^y<1}dy = \frac{2H(T)}{\pi i^3T^3},
\end{multline*}
where in the inequality we applied the inequality that $p_{0,t}(w,y)\le 2\sqrt{y}/(\pi t^3)$ (see~\eqref{eq:upper_pstq}).
Thus,
\[
\sif i1\proba\pp{\inf_{t\in[iT,(i+1)T]}\Z_t^w<1}
\leq \frac{2H(T)}{\pi T^3} \sif i1 \frac1{i^3}<\infty.
\]

Now for each $i\ge 1$, notice that the two events $A_{1,T}\topp{q,\epsilon}$ and $A_{i+1,T}\topp{q,\epsilon}$ are determined by $\wt X\topp {q,\epsilon}_{t\in[0,T]}$ and $\wt X\topp {q,\epsilon}_{t\in[iT,(i+1)T]}$ respectively. 
Conditioning on the position of $\wt X_T\topp {q,\epsilon} = w$, the two processes are independent. 
More specifically, view the latter 
as the Markov process with the same semigroup starting at $w$ over time interval $[(i-1)T,iT]$, and the former as the reversed Markov process $\what X_t\topp{q,\epsilon} := \wt X_{T-t}\topp{q,\epsilon}, t\in[0,T]$, again starting at $\what X_0\topp {q,\epsilon} = w$. 
Since the original $q$-Ornstein--Uhlenbeck process is stationary, so is $\wt X\topp{q,\epsilon}$. It then follows that $\what X\topp{q,\epsilon}$ has the same semigroup as $\wt  X\topp{q,\epsilon}$. In particular,
\begin{multline*}
\proba\pp{A_{1,T}\topp{q,\epsilon}\mmid\wt X\topp{q,\epsilon}_T = w} = \proba\pp{\inf_{t\in[0,T]}\what X_t\topp{q,\epsilon}\mmid \what X_0 \topp{q,\epsilon} = w} \\
= \proba\pp{\inf_{t\in[0,T]}\wt X_t\topp{q,\epsilon}\mmid \wt X_0 \topp{q,\epsilon} = w} = \proba\pp{A_{1,T}\topp{q,\epsilon}\mmid\wt X\topp{q,\epsilon}_0 = w}, \mfa w>0.
\end{multline*}  Therefore, we have 
\begin{multline*}
\proba\pp{A_{1,T}\topp {q,\epsilon}\cap A_{i+1,T}\topp {q,\epsilon}} 
\\
 = \int_0^\infty p\topp {q,\epsilon}(x)\proba\pp{A_{1,T}\topp {q,\epsilon}\mmid \wt X_0\topp {q,\epsilon} = w}
\proba\pp{A_{i,T}\topp {q,\epsilon}\mmid \wt X_0\topp {q,\epsilon} = w}dw.
\end{multline*}
Similarly as in Lemma~\ref{lem:doublesum1}, we show by the dominated convergence theorem that
\begin{multline}\label{eq:doublesum1}
\proba\pp{A_{1,T}\topp {q,\epsilon}\cap A_{i,T}\topp {q,\epsilon}} 
\\
\sim \frac{\epsilon^3(q)_\infty^3}{\pi}\int_0^\infty \sqrt w\cdot \proba\pp{\inf_{t\in[0,T]}\Z_t^w<1}\proba\pp{\inf_{t\in[(i-1)T,iT]}\Z_t^w<1}dw,
\end{multline}
and 
\begin{multline}\label{eq:doublesum2}
\summ i1{N_{T,\epsilon}}\sum_{\substack{j=1\\j\neq i}}^{N_{T,\epsilon}} \proba\pp{A_{i,T}\topp {q,\epsilon}\cap A_{j,T}\topp {q,\epsilon}}\\
 \sim L\cdot\frac{\epsilon^2(q)_\infty^3}{T\pi}\int_0^\infty\sqrt w \cdot \proba\pp{\inf_{t\in[0,T]}\Z_t^w<1}\bb{\sif i1\proba\pp{\inf_{t\in[iT,(i+1)T]}\Z_t^w<1}}dw.
\end{multline}
Indeed, the pointwise convergences are straightforward, and it remains to find integrable upper bounds.  In addition to $s_{T}\topp {q,\epsilon}$, introduce
\[
s_{i,T}\topp {q,\epsilon}(w):= \proba\pp{A_{i+1,T}\topp {q,\epsilon}\mmid \wt X_0\topp {q,\epsilon} = w}.
\]
We have, for some constant $C$ not depending on $i$, 
\begin{multline*}
s_{i,T}\topp{q,\epsilon}(w) = \int_0^\infty p_{0,iT}\topp{q,\epsilon}(w,y)\proba\pp{A_{1,T}\topp {q,\epsilon}\mmid \wt X_0\topp q = y}dy \\
\leq \frac {CiT}{\sinh^4(\epsilon iT/2)/\epsilon^4}\int_0^\infty\sqrt y s_T\topp{q,\epsilon}(y)dy,
\end{multline*}
where the last inequality we applied~\eqref{eq:upper_pstq}, and the  integral is finite because of~\eqref{eq:sTq}. Therefore, 
\equh\label{eq:i^3}
\limsup_{\epsilon\downarrow0}s_{i,T}\topp{q,\epsilon}(w)\leq \frac C{(iT)^{3}}\mfa w\in\R_+.
\eque
 This and~\eqref{eq:upper_pq} yield~\eqref{eq:doublesum1}. For~\eqref{eq:doublesum2}, 
express the double sum as
\begin{multline*}
\summ i1{N_{T,\epsilon}}\sum_{\substack{j=1\\j\neq i}}^{N_{T,\epsilon}} \proba\pp{A_{i,T}\topp {q,\epsilon}\cap A_{j,T}\topp {q,\epsilon}} 
\\
=N_{T,\epsilon}\sif i1\pp{1-\frac i{N_{T,\epsilon}}}_+\int_0^\infty p\topp {q,\epsilon}(w)s_{T}\topp {q,\epsilon}(w)s_{i,T}\topp {q,\epsilon}(w)dw.
\end{multline*}
Now~\eqref{eq:bound_rqsq} and~\eqref{eq:i^3} provide an integrable upper bound for the integrant above,  and thus yield~\eqref{eq:doublesum2}.

\end{proof}
\begin{proof}[Proof of Theorem~\ref{thm:doublesum}]
Applying the previous two lemmas to~\eqref{eq:doublesum}, we obtain, for all $S,T>0$, 
\begin{multline}\label{eq:ST}
L\cdot \frac{H(S)}S - L\cdot \frac{H(S)^2}{S^4}\cdot {\frac{2(q)_\infty^3}{\pi^2}\sif i1\frac1{i^3}}
\leq\liminf_{\epsilon\downarrow0}\frac{u_L\topp {q,\epsilon}}{\epsilon^2 (q)_\infty^3/\pi}\\
\leq
\limsup_{\epsilon\downarrow0}\frac{u_L\topp {q,\epsilon}}{\epsilon^2 (q)_\infty^3/\pi}
\leq L\cdot\frac{H(T)}T.
\end{multline}
Again by Lemma~\ref{lem:doublesum1}, we have
\begin{multline*}
\frac LT \frac{\epsilon^2 (q)_\infty^3}\pi H(T)\sim 
\summ i1{N_{T,\epsilon}}\proba\pp{A_{i,T}\topp {q,\epsilon}}
 \leq \summ i1{N_{T,\epsilon}}(\floor T+1)\proba\pp{A_{i,1}\topp {q,\epsilon}} \\
\sim \frac{\sfloor T+1}TN_{1,\epsilon}\proba\pp{A_{i,1}\topp {q,\epsilon}} \sim \frac{(\sfloor T+1)L}T \frac{\epsilon^2(q)_\infty^3}\pi H(1) \mmas \epsilon\downarrow0.
\end{multline*}
So we have $H(T)\leq (\floor T+1)H(1)$ for all $T>0$, whence $\limsup_{T\to\infty}H(T)/T\leq H(1)<\infty$. Next, the left-hand side of~\eqref{eq:ST} is bounded from below by
\[
\frac{H(S)}S\pp{1-\frac C{S^2}},
\]
which is strictly positive for $S$ large enough. Fix such an $S$, and taking the limit on the right-hand side of~\eqref{eq:ST}, it follows that $\liminf_{T\to\infty}H(T)/T>0$. Now, taking the limit on both sides of~\eqref{eq:ST}, we have that
\[
0\le \limsup_{T\to\infty}\frac{H(T)}T\leq\liminf_{T\to\infty}\frac{H(T)}T<\infty.
\] That is, $H$ in~\eqref{eq:H} is a well-defined finite constant, and we have seen that it is strictly positive. The proof is thus completed.
\end{proof}

\section{Minimum process}\label{sec:BR}
For each $n\in\N$, let $\vv X_n\topp q\equiv\{X\topp q_{n,t}\}_{t\in\R}$ be an independent copy of $X\topp q$. We consider the non-degenerate limit for the process
\[
\ccbb{\frac{\min_{i=1,\dots,n}X_{i,a_nt}\topp q - b_q^-}{b_n}}_{t\in\R}
\]
as $n\to\infty$ 
in the space $D(\R)$, for some $\indn a$ and $\indn b$ appropriately chosen.
We first describe the limit process denoted by $\eta \equiv\{\eta(t)\}_{t\in\R}$ below. This  is a stationary process based on a stationary system of Markov processes considered in \citep{brown70property,engelke15max}. It is also a new example of the so-called semi-min-stable (SMS) processes introduced in \citep{penrose92semi}.
\subsection{Representations of limit minimum process}
We provide two representations of the limit minimum process. 
Recall that we characterize the tangent process $\{\Z_t\}_{t\ge0}$ by its semigroup~\eqref{eq:semigroup_Z}. The initial distributions that we shall consider are all in the form of  a unit point mass at some point $w\in(0,\infty)$, denoted by $\delta_w$. 
For both representations, we need to consider the two-sided extension of $\Z^w$ as a process defined on $\R$, still denoted as $\Z^w$ as follows. Let $\Z^{w,\pm}\equiv \{\Z^{w,\pm}_t\}_{t\ge 0}$  be two independent Markov processes with the same semigroup $(P_t)_{t\ge 0}$ and initial distribution $\delta_w$.
We assume that $\Z^{w,+}$ is in $D([0,\infty))$, and $\Z^{w,-}$ is in the space of functions that are left-continuous with right limits. Set $\Z^w_t := \Z^{w,+}_t$ if $t\ge 0$ and $\Z^w_t:= \Z^{w,-}_t$ otherwise. In this way, $\Z^w$ is in $D(\R)$. 
 We refer to the so-defined $\Z^w$ as the two-sided tangent process defined on the real line  starting from $w$.

Let $\indn{W}$ be enumerations of points from a Poisson point process defined on $\R_+$ with intensity $(3/2)w^{1/2}dw$, and for each $n\in\N$, let $\Z_n^{W_n}$ be a two-sided tangent process on $\R$ starting from $W_n$. It is assumed that $\indn{\Z^{W_n}}$ are conditionally independent given $\indn {W}$, and in $D(\R)$. 
Equivalently,  $\{\Z_n^{W_n}\}_{n\in\N}$ can be viewed as a Poisson point process, and the aforementioned construction  is a special case of the general framework of stationary systems of two-sided Markov processes considered in \citep{brown70property,engelke15max}. 
Now we consider
\equh\label{eq:eta}
\eta(t):= \inf_{n\in\N}\Z_{n,t}^{W_n}, t\in\R.
\eque
\begin{Lem}
The process $\{\eta(t)\}_{t\in\R}$ is in $D(\R)$ almost surely. It is a stationary process with  marginal distribution
\[
\proba(\eta(t)\leq x) = 1-\exp\pp{-x^{3/2}}, x>0,
\]
and finite-dimensional distribution, for all $m\in\N, t_1,\dots,t_m\in\R, x_1,\dots,x_m>0$,
\[
\proba\pp{\eta(t_1)> x_1,\dots,\eta({t_m})> x_m} = \exp\ccbb{-\int_0^\infty \proba\pp{\min_{i=1,\dots,m}\frac{\Z^w_{t_i}}{x_i}\leq 1}\frac32w^{1/2}dw}.
\]
\end{Lem}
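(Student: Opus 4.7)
My plan breaks the lemma into four steps, in order: the finite-dimensional distribution formula, the marginal, stationarity, and sample-path regularity. The unifying tool is that by construction $\{(W_n, \Z_n^{W_n})\}_{n \in \N}$ is a Poisson point process on $\R_+ \times D(\R)$ with intensity $\frac{3}{2} w^{1/2} dw \otimes \proba(\Z^w \in \cdot)$, so every step reduces to a manipulation of this intensity or its marginal on $D(\R)$.

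For the finite-dimensional distribution I would apply the void-probability formula for Poisson point processes to the set $\{(w, \varphi) : \varphi(t_i) \le x_i \text{ for some } i \le m\}$; this yields the stated formula after rewriting $\{\exists i,\ \Z^w_{t_i} \le x_i\}$ as $\{\min_i \Z^w_{t_i}/x_i \le 1\}$. For the marginal I specialize to $m = 1$ and use that $\pi(dw) = w^{1/2} dw$ is an invariant (infinite) measure for $(P_t)_{t \ge 0}$, as recorded before Proposition~\ref{prop:transience}: this collapses $\int_0^\infty \proba(\Z^w_t \le x) w^{1/2} dw$ to $\int_0^x v^{1/2} dv = \frac{2}{3} x^{3/2}$, so $\proba(\eta(t) > x) = \exp(-x^{3/2})$.

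For stationarity I would show that the $\sigma$-finite measure $\nu(A) := \int_0^\infty \proba(\Z^w \in A) \frac{3}{2} w^{1/2} dw$ on $D(\R)$ is invariant under the time shift $\theta_h : f \mapsto f(\cdot + h)$; then shift invariance transfers to the PPP of paths and hence to $\eta$. It suffices to check invariance of finite-dimensional marginals of $\nu$. Given $\Z^w_0 = w$, the two-sided construction makes the coordinates at times $t_i < 0$ and $t_i \ge 0$ conditionally independent, so the joint density of $(\Z^w_{t_1}, \ldots, \Z^w_{t_m})$ integrated against $\pi(dw)$ factors into a backward chain and a forward chain both emanating from $0$. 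Applying the reversibility identity $w^{1/2} p_{0, t}(w, u) = u^{1/2} p_{0, t}(u, w)$ repeatedly to push $\pi$ outward along the backward chain, then using Chapman-Kolmogorov to merge the two chains at the origin, one reduces the expression to a single chain of transitions depending only on the increments $t_i - t_{i-1}$. The same expression arises for the translates $(t_1 + h, \ldots, t_m + h)$, proving invariance. The main technical obstacle is the bookkeeping across the sign cases as $h$ repositions $0$ relative to the $t_i$: the computation has to give the same answer regardless of which $t_i$ fall on the positive or negative side of the origin.

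Finally, for $\eta \in D(\R)$ a.s., it suffices to show that on any compact $[-T, T]$ and for any $K > 0$ only finitely many $\Z_n^{W_n}$ attain values below $K$: the infimum then coincides with a minimum of finitely many c\`adl\`ag functions on $[-T, T]$, which is c\`adl\`ag. By the Campbell formula the expected count equals $\int_0^\infty \proba(\inf_{t \in [-T, T]} \Z^w_t < K) \frac{3}{2} w^{1/2} dw$. Self-similarity \eqref{eq:SS} gives $\proba(\inf_{[0, T]} \Z^w_t < K) = \proba(\inf_{[0, T/\sqrt K]} \Z^{w/K}_t < 1)$, and combining with \eqref{eq:infZ} applied separately to the forward and backward halves produces $O(w^{-2})$ decay for large $w$, making the integral finite; in particular $\eta(t) > 0$ and $\eta(t) < \infty$ at every $t$ a.s., and the sample-path conclusion follows.
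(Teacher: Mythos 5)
Your proposal is correct in substance and follows the same skeleton as the paper's proof (Poisson void probabilities for the finite-dimensional distributions, the invariant measure $w^{1/2}dw$ for the marginal, and a.s.\ finiteness of the ``low'' paths for the path regularity), but two steps are handled differently. For the marginal you invoke invariance of $\pi(dw)=w^{1/2}dw$ directly to collapse $\int_0^\infty \proba(\Z^w_t\le x)\,\tfrac32 w^{1/2}dw$ to $x^{3/2}$, whereas the paper first uses self-similarity \eqref{eq:SS} to pull out the factor $x^{3/2}$ and only then invariance to remove the time; your version is a bit more direct. For stationarity the paper simply cites \citep[Theorem 2.1]{engelke15max}, while you propose to verify shift-invariance of the path measure $\nu$ by hand via the detailed-balance identity $w^{1/2}p_{0,t}(w,u)=u^{1/2}p_{0,t}(u,w)$ together with the conditional independence of the forward and backward halves given the value at $0$; that is exactly the content of the cited result (reversibility of $\pi$ is what allows the backward half to be an independent copy of the same semigroup rather than the dual), so your route is more self-contained at the cost of the sign-case bookkeeping you acknowledge.

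One point needs tightening in your c\`adl\`ag argument. Knowing that for a \emph{fixed} $K$ only finitely many paths dip below $K$ on $[-T,T]$ does not by itself let you write $\eta$ as a finite minimum there: at times where all of those finitely many paths exceed $K$, the infimum could in principle be governed by the remaining infinitely many paths (all of which stay $\ge K$ but whose pointwise infimum need not be c\`adl\`ag). The fix is the one the paper builds in from the start: since $\eta(t)\le \Z^{W_1}_{1,t}$, take the threshold to be the (random, a.s.\ finite) level $M=\sup_{t\in[-T,T]}\Z^{W_1}_{1,t}$, or any integer above it; then every path outside the finite index set is everywhere $\ge M\ge \eta$, so the infimum genuinely equals the finite minimum, as in \eqref{eq:finite}. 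Your Campbell-formula bound via \eqref{eq:SS} and \eqref{eq:infZ} does give a.s.\ finiteness simultaneously for all integer levels, so the repair costs one sentence, but as written the reduction to a finite minimum is not yet justified.
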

\begin{proof}
We first show the process is in $D(\R)$. For this purpose, it suffices to show when restricted to any finite interval $[0,L]$, the process is in $D([0,L])$. Indeed, 
consider the set of indices
\[
A_L:= \ccbb{n\in\N: \inf_{t\in[0,L]}\Z_{n,t}^{W_n}\le M_{L}} \qmwith M_L:=\sup_{t\in[0,L]}\Z_{1,t}^{W_1}.
\]
Then, 
\equh\label{eq:finite}
\eta(t) = \inf_{n\in A_L}\Z_{n,t}^{W_n}, \mfa t\in[0,L],
\eque
and  it suffices to show that $|A_L|$ is almost surely finite. By the property of Poisson point process, 
 $|A_L|-1$ is distributed as a Poisson random variable with parameter 
\[
\int_0^\infty w^{1/2}\proba\pp{\inf_{t\in[0,L]}\Z_{n,t}^w\le M_L\mmid \Z_{1,\cdot}^{W_1}}dw,
\]
which is finite almost surely because of~\eqref{eq:infZ}.
Now,~\eqref{eq:finite} tells that almost surely over any finite interval, the process $\eta$ is the pointwise minimum of a finite number of processes in $D$, and hence also in $D$. 

The expression of finite-dimensional distributions follows from the definition of Poisson point processes. 
From there to obtain the marginal distribution, observe that
\[
\proba\pp{\eta(t)> x} = \exp\ccbb{-\int_0^\infty\proba\pp{\Z_t^w\le x}\frac32w^{1/2}dw}, t,x>0.
\]
By self-similarity~\eqref{eq:SS}, the integration on the right-hand side above equals
\begin{multline*}
\int_0^\infty\proba\pp{\Z_{t/x^{1/2}}^{w/x}\le1}\frac32w^{1/2}dw = x^{3/2}\int_0^\infty\proba\pp{\Z_{t/x^{1/2}}^w\le 1}\frac 32w^{1/2}dw \\
= x^{3/2}\int_0^\infty\proba(\Z_0^w\leq 1)\frac32w^{1/2}dw = x^{3/2}\int_0^1\frac32w^{1/2}dw= x^{3/2},
\end{multline*}
where in the second equality we used the fact that the stationary distribution of the Markov process $\Z$ has density proportional to $w^{1/2}dw$. 
A similar argument shows the stationarity of the one-sided process by computing finite-dimensional distributions. For the stationarity of the two-sided process, it suffices to recall 
 the construction of $\{\Z^w_t\}_{t\in\R}$ by  using the dual Markov process in the reversed direction. See 
\citep[Theorem 2.1]{engelke15max} for more details. In particular, for
$\Pi:=\sif n1 \delta_{\Z_{n,\cdot}^{W_n}}$
as a Poisson point process on $D(\R)$, it is stationary in the sense that
\[
\sif n1 \delta_{\Z_{n,\cdot}^{W_n}} \eqd \sif n1 \delta_{\Z_{n,\cdot +h}^{W_n}}\mfa h\in\R.
\]
\end{proof}

Now, we provide another representation of $\eta$ as a SMS process introduced by \citet{penrose92semi}. This class of stochastic processes forms  a special class of the min-i.d.~process. Recall that a non-negative stochastic process $\{Z_t\}_{t\in\R}$ is said to be min-i.d., if for all $n\in\N$, there exists i.i.d.~stochastic processes $\{Z\topp n_{i}\}_{i=1,\dots,n}$ such that
\[
\ccbb{Z_t}_{t\in\R} \eqd \ccbb{\min_{i=1,\dots,n}Z_{i,t}\topp n}_{t\in\R}.
\]
Furthermore, $Z$ is said to be SMS with parameter $(\alpha,\beta)\in(0,\infty)\times[0,\infty)$, if for i.i.d.~copies $\{Z_i\}_{i\in\N}$ of $Z$,
\equh\label{eq:SMS}
\ccbb{Z_t}_{t\in\R}\eqd \ccbb{n^{1/\alpha}\min_{i=1,\dots,n}Z_{i,t/n^{\beta}}}_{t\in\R} \mfa n\in\N.
\eque
In particular, if $Z$ is $(\alpha,0)$-SMS, then it is also a min-stable process with marginal $\alpha$-Weibull distribution; or equivalently,  $1/Z$ is an $\alpha$-Fr\'echet max-stable process. (Here it is only a matter of convention that which type of extreme value distributions to choose;  most literatures are based on either Fr\'echet or Gumbel distributions.)
An important result is due to 
\citet{penrose92semi}, who proved that the class of SMS processes coincides with the all the limit minimum processes of i.i.d.~copies of stochastic processes with appropriate scalings  in both time and magnitude. It is well known that when no temporal scaling is allowed, then the limit process is necessarily min-stable, although allowing temporal scaling does not necessarily lead to a non-min-stable process (e.g.~the Brown--Resnick process). 

Surprisingly, we are aware of only one example in the literature, due to  \citet{penrose91minima},
where the limit process is SMS but not min-stable. Here, the process $\eta$ in~\eqref{eq:eta} provides another example of such type.
To see this, we derive  an equivalent spectral representation of $\eta$. 
Observe
\begin{multline*}
\int_0^\infty \proba\pp{\min_{i=1,\dots,n}\frac{\Z^w_{t_i}}{x_i}\le 1} \frac32w^{1/2}dw = \int_0^\infty \proba\pp{w\min_{i=1,\dots,m}\frac{\Z^1_{t_i/w^{1/2}}}{x_i}\le 1}dw \\
= \int_0^\infty\proba\pp{y^{2/3}\min_{i=1,\dots,m}\frac{\Z^1_{t_i/y^{1/3}}}{x_i}\le 1}dy,
\end{multline*}
where we applied the self-similarity property~\eqref{eq:SS} of $\Z$ and change-of-variable $y = w^{3/2}$. This yields
\equh\label{eq:penrose}
\ccbb{\eta(t)}_{t\in\R}\eqd \ccbb{\inf _{n\in\N}W_n^{2/3} \Z^1_{n,tW_n^{-1/3}}}_{t\in\R},
\eque
where $\{W_n\}_{n\in\N}$ are enumerations of points from a standard Poisson point process, and $\{\Z^1_n\}_{n\in\N}$ are i.i.d.~copies of $\Z^1$, independent from $\indn W$. 
The representation~\eqref{eq:penrose} was introduced for general SMS processes under mild assumptions in addition to~\eqref{eq:SMS} in \citep[Example 1 and Theorem 5]{penrose92semi} (therein, $\alpha$-SMS corresponds to $(1,\alpha)$-SMS in our notation).   
For completeness we show that $\eta$ is $(3/2,1/3)$-semi-min-stable directly. Let $\{\eta_j\}_{j=1,\dots,n}$ be i.i.d.~copies of $\eta$. Then,
\begin{align*}
\proba& \pp{n^{2/3}\min_{j=1,\dots,n}\eta_j(t_i/n^{1/3})>x_i, i=1,\dots,m} \\
& = \exp\ccbb{- n\int_0^\infty \proba\pp{n^{2/3}y^{2/3}\min_{i=1,\dots,m}\frac{\Z^1_{t_i/(n^{1/3}y^{1/3})}}{x_i}\le1}dy}\\
&
= \proba\pp{\min_{i=1,\dots,m}\frac{\eta(t_i)}{x_i}>1}.
\end{align*}

\subsection{A Brown--Resnick-type limit theorem}
Recall that $\indn{\vv X\topp{q}}$ denote i.i.d.~copies of the process $\vv X\topp q\equiv \{X_t\topp q\}_{t\in\R}$. We then write accordingly $\wt {\vv X}_n\topp{q,\epsilon}:=\{\wt X_{n,t}\topp{q,\epsilon}\}_{t\in\R}$ with 
\[
\wt X_{n,t}\topp{q,\epsilon}:=\sqrt{1-q}\cdot \frac{X_{n,\epsilon t}\topp{q}-b_q^-}{\epsilon^2}, t\in\R, n\in\N. 
\]
The main result of this section is the following theorem.
\begin{Thm}\label{thm:BR}
For all  $q\in(-1,1)$,
\equh\label{eq:DR}
\ccbb{\min_{i=1,\dots,n}\wt X_{i,t}\topp{q,\epsilon_n}}_{t\in\R}\weakto \ccbb{\eta(t)}_{t\in\R}
\eque
in $D(\R)$ with
\[
\epsilon_n:= \pp{\frac{3\pi}{2(q)^3_\infty}}^{1/3}\frac1{n^{1/3}}, n\in\N.
\]

\end{Thm}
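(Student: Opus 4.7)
The plan is to follow the Brown--Resnick paradigm, establishing finite-dimensional convergence first and then upgrading to functional convergence in $D(\R)$ via a point-process argument, with Theorem~\ref{thm:tangent} and the asymptotics developed in Section~\ref{sec:EVT} as the key inputs.

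First I would prove finite-dimensional convergence. Fix $t_1<\cdots<t_m$ and $x_1,\dots,x_m>0$. By the i.i.d.\ assumption,
\[
\proba\pp{\min_{i=1,\dots,n}\wt X_{i,t_j}\topp{q,\epsilon_n}>x_j,\ j=1,\dots,m} = (1-p_n)^n,
\]
where $p_n:=\proba(\wt X_{1,t_j}\topp{q,\epsilon_n}\le x_j\text{ for some }j)$. Matching the finite-dimensional law of $\eta$ reduces to showing $np_n\to \int_0^\infty \proba(\min_j \Z_{t_j}^w/x_j\le 1)\frac32 w^{1/2}dw$. By stationarity of $X\topp q$ one may shift so that $t_1=0$, and then condition on $\wt X_0\topp{q,\epsilon_n}=w$. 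Theorem~\ref{thm:tangent} and the continuous mapping theorem give pointwise convergence of the conditional probability to $\proba(\min_j\Z_{t_j}^w/x_j\le 1)$. Combined with the density asymptotics $p\topp{q,\epsilon_n}(w)\sim (q)_\infty^3\sqrt{w}\,\epsilon_n^3/\pi$ and dominated convergence (using transition-density upper bounds analogous to~\eqref{eq:bound_rqsq}, which dominate the integrand by $C\epsilon_n^3(\sqrt{w}\wedge w^{-3/2})$), one obtains $p_n\sim \epsilon_n^3((q)_\infty^3/\pi)\int_0^\infty\sqrt{w}\,\proba(\min_j\Z_{t_j}^w/x_j\le 1)dw$. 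The scaling $n\epsilon_n^3=3\pi/(2(q)_\infty^3)$ delivers the target limit exactly.

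Second I would upgrade to functional convergence in $D(\R)$. By stationarity it suffices to prove convergence on $D([-L,L])$ for every $L>0$. The representation~\eqref{eq:eta} (and the preceding lemma) shows that on $[-L,L]$ the process $\eta$ coincides with the minimum over a finite random family of trajectories $\Z_n^{W_n}$ whose infimum on $[-L,L]$ falls below some random threshold $M_L$. I would mirror this structure on the prelimit side by considering the marked point processes
\[
\Pi_n^{L,M}:=\sum_{i=1}^n\delta_{\wt{\vv X}_i\topp{q,\epsilon_n}}\ind{\inf_{t\in[-L,L]}\wt X_{i,t}\topp{q,\epsilon_n}\le M}.
\]
A variant of Theorem~\ref{thm:doublesum} (obtained either by adapting the double-sum method of Section~\ref{sec:EVT} or by invoking the self-similarity~\eqref{eq:SS} of $\Z$) yields $\proba(\inf_{[-L,L]}\wt X_1\topp{q,\epsilon_n}\le M)=O(\epsilon_n^3)$, so the intensity of $\Pi_n^{L,M}$ stays bounded. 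Conditional on achieving a low value somewhere in $[-L,L]$, the contributing trajectory converges to a two-sided tangent process; this extension of Theorem~\ref{thm:tangent} relies on the Markov reversibility of $\wt X\topp{q,\epsilon}$ exploited already in the argument preceding~\eqref{eq:doublesum1}. Together these ingredients yield weak convergence of $\Pi_n^{L,M}$ to the corresponding restriction of the limiting Poisson point process $\sum_n\delta_{\Z_n^{W_n}}$.

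To conclude, the pointwise minimum over a finite collection of $D$-paths is a continuous functional on the Skorohod space when the paths have no simultaneous jumps, which occurs almost surely for the limiting PPP. Any path not contributing to $\Pi_n^{L,M}$ satisfies $\inf_{[-L,L]}>M$ and therefore cannot affect the minimum on $[-L,L]$; letting $M\to\infty$ after $n\to\infty$, and then $L\to\infty$, closes the argument. The hardest step will be the tightness / point-process convergence of $\Pi_n^{L,M}$: finite-dimensional convergence falls out cleanly from Theorem~\ref{thm:tangent} and the density asymptotics, but the joint convergence of trajectories with Poissonian starting points, together with the two-sided tangent extension and Skorohod continuity of the $\min$ functional, requires careful bookkeeping that goes beyond the one-sided tools of Sections~\ref{sec:tangent}--\ref{sec:EVT}.
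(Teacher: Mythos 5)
Your overall architecture (truncate to a finite family of ``low'' trajectories, pass to the limit using the tangent process and continuity of the pointwise minimum, then control the remainder) is sound, and your finite-dimensional computation is correct: the choice $n\epsilon_n^3=3\pi/(2(q)_\infty^3)$ does turn $np_n$ into the exponent of the limiting law, with domination supplied by bounds of the type \eqref{eq:upper_pq} and \eqref{eq:sTq}. However, the step you yourself flag as hardest contains a genuine gap, not just bookkeeping. You threshold trajectories on the event $\inf_{t\in[-L,L]}\wt X\topp{q,\epsilon_n}_{i,t}\le M$ and then assert that, conditionally on ``achieving a low value somewhere in $[-L,L]$,'' the contributing trajectory converges to a two-sided tangent process. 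That is a Palm-type conditioning on the running infimum, not the conditioning on the time-zero value that Theorem~\ref{thm:tangent} actually provides; nothing in Sections~\ref{sec:tangent}--\ref{sec:EVT} gives it to you, and no two-sided analogue of Theorem~\ref{thm:tangent} is proved anywhere. Either would be a substantial piece of work on its own. A smaller but real slip: a discarded path with $\inf_{[-L,L]}>M$ \emph{can} affect the minimum, namely at times where the retained minimum also exceeds $M$; ruling this out requires tightness of the supremum of the lowest-starting trajectory, which is precisely the role of $b_\delta$ in the paper's step \eqref{eq:BR3}.

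The paper sidesteps both difficulties by two choices you did not make. First, by stationarity it reduces to $D([0,2L])$, so only the forward, one-sided Markov evolution from time $0$ is ever needed; the two-sided extension enters only in the \emph{definition} of $\eta$ and the verification of its stationarity (the first lemma of Section~\ref{sec:BR}), never in the convergence proof. Second, it truncates on the initial values $W\topp{q,n}_i=\wt X\topp{q,\epsilon_n}_{i,0}\le\kappa$ rather than on running infima; the trajectory point-process convergence you are after then reduces to the classical convergence of order statistics $\sum_{i\le n}\delta_{W_i\topp{q,n}}\weakto\sum_n\delta_{W_n}$ (from $n\proba(W_1\topp{q,n}\le x)\to x^{3/2}$) combined with Theorem~\ref{thm:tangent} applied to each of finitely many ranked paths (Lemma~\ref{lem:each_path}), so the only conditioning used is the one already available. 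The remainder (paths starting above $\kappa$ that dip low) is controlled via the Khoshnevisan--Xiao bound \eqref{eq:infZ}, which makes $\int_\kappa^\infty\sqrt w\,\proba(\inf_{t\in[0,L]}\Z^w_t<b_\delta)\,dw$ vanish as $\kappa\to\infty$. If you reorganize your point process $\Pi_n^{L,M}$ so that the mark is the time-zero value rather than the running infimum, your plan becomes essentially the paper's proof; as written, its central step is unsupported.
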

To prove the weak convergence in~\eqref{eq:DR}, it suffices to prove it in $D([-L,L])$ for all $L>0$, or equivalently in $D([0,2L])$ by stationarity. From now on we fix $L>0$ and focus on weak convergence in $D([0,L])$. 
We first introduce some notations for the transformed $q$-Ornstein--Uhlenbeck processes. Set
\[
W_i\topp{q,n} := \wt X_{i,0}\topp {q,\epsilon_n}, i\in\N.
\]
Consider the order-statistics of $\{W_i\topp{q,n}\}_{i=1,\dots,n}$ denoted by
\[
W_{1:n}\topp{q,n}\leq W_{2:n}\topp{q,n}\leq\cdots\leq W_{n:n}\topp{q,n}.
\]
The event that  all the inequalities above are strict  has probability one, and we shall focus on this event in the rest of this section. For each $n$,  order accordingly $\{\wt {\vv X}_i\topp{q,\epsilon_n}\}_{i=1,\dots,n}$ into
$\sccbb{\wt {\vv X}_{i:n}\topp{q,\epsilon_n}}_{i=1,\dots,n}$.

\begin{Lem}\label{lem:each_path}
for each $i$ fixed,
\equh\label{eq:each_path}
\ccbb{
\wt X_{i:n,t}\topp{q,\epsilon_n}}_{t\in[0,L]}\weakto \ccbb{\Z^{W_i}_{i,t}}_{t\in[0,L]}
\eque
in $D([0,L])$ as $n\to\infty$. 
\end{Lem}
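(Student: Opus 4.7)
The claim combines two ingredients: (i) convergence of the ordered sequence of initial values $\{W_j\topp{q,n}\}_{j=1}^n$ on $\R_+$ to the order statistics of a Poisson point process, and (ii) conditional path convergence, saying that given its initial value the $i$-th ordered trajectory behaves like the tangent process. Combining (i) and (ii) yields joint convergence of initial value and path, from which the statement follows.

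\emph{Step 1 (Poisson convergence of initial values).} The asymptotics used in the proof of Lemma~\ref{lem:doublesum1} yield $p\topp{q,\epsilon}(w)\sim (q)_\infty^3\sqrt w\,\epsilon^3/\pi$ as $\epsilon\downarrow 0$, with the integrable majorant~\eqref{eq:upper_pq}. The choice $\epsilon_n^3 = 3\pi/(2(q)_\infty^3 n)$ is calibrated so that $n\,p\topp{q,\epsilon_n}(w)\to (3/2)w^{1/2}$ pointwise, and by dominated convergence the measure $n\,\proba(W_1\topp{q,n}\in\cdot)$ converges vaguely to $(3/2)w^{1/2}dw$ on $\R_+$. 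The standard Poisson convergence theorem for triangular arrays of i.i.d.~variables then gives $\sum_{j=1}^n\delta_{W_j\topp{q,n}}\weakto\sum_{k\in\N}\delta_{W_k}$ vaguely in distribution, where the limit is a Poisson point process on $\R_+$ with intensity $(3/2)w^{1/2}dw$. In particular, the first $i$ order statistics $(W_{1:n}\topp{q,n},\dots,W_{i:n}\topp{q,n})$ jointly converge to those of the Poisson point process, and the limit is almost surely strictly positive.

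\emph{Step 2 (Path convergence given initial value).} By exchangeability and the Markov property, conditional on $W_{i:n}\topp{q,n}=w$, the law of $\wt{\vv X}_{i:n}\topp{q,\epsilon_n}$ is $\proba(\wt{\vv X}\topp{q,\epsilon_n}\in\cdot\mid\wt X_0\topp{q,\epsilon_n}=w)$. Theorem~\ref{thm:tangent} gives, for each fixed $w>0$, weak convergence of this law to $\proba(\Z^w\in\cdot)$ in $D([0,L])$. The hard part of the argument is strengthening this to convergence along \emph{arbitrary} sequences of initial points $w_n\to w^*>0$. This strengthening is already built into the semigroup-based proof of Theorem~\ref{thm:tangent}: Theorem~\ref{thm:4.2.11} establishes the semigroup convergence~\eqref{eq:ethier} uniformly in $x\in[0,4/\epsilon^2]$, and Lemma~\ref{lem:continuity} provides the Feller continuity of $(P_t)$. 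Combined with the standard Ethier--Kurtz machinery, these yield $\proba(\wt{\vv X}\topp{q,\epsilon_n}\in\cdot\mid\wt X_0\topp{q,\epsilon_n}=w_n)\weakto\proba(\Z^{w^*}\in\cdot)$ in $D([0,L])$ whenever $w_n\to w^*>0$.

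\emph{Combining.} Applying the Skorokhod representation theorem to the convergence in Step~1, one may assume $W_{i:n}\topp{q,n}\to W_i$ almost surely with $W_i>0$; Step~2 then yields convergence of the conditional laws of the path. Equivalently, for any bounded continuous $\Phi$ on $D([0,L])$ and $\Psi$ on $\R_+$, setting $g_n(w):=\esp[\Phi(\wt{\vv X}\topp{q,\epsilon_n})\mid\wt X_0\topp{q,\epsilon_n}=w]$, one has
\[
\esp\bb{\Psi(W_{i:n}\topp{q,n})\Phi(\wt{\vv X}_{i:n}\topp{q,\epsilon_n})}=\esp\bb{\Psi(W_{i:n}\topp{q,n})\,g_n(W_{i:n}\topp{q,n})},
\]
and Step~2 supplies precisely the equicontinuity of $\{g_n\}$ at limit points needed to pass to the limit together with Step~1, giving the desired joint convergence to $(W_i,\Z_i^{W_i})$. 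The principal technical obstacle is the upgrade from ``fixed starting point'' in Theorem~\ref{thm:tangent} to ``convergent random starting points,'' which is afforded exactly by the uniformity in $x$ of the semigroup estimate~\eqref{eq:ethier}.
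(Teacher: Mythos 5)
Your proposal is correct and follows essentially the same route as the paper: the paper likewise combines the point-process convergence of the order statistics of the initial values (via $n\proba(W_1\topp{q,\epsilon_n}\le x)\to x^{3/2}$) with the semigroup convergence~\eqref{eq:ethier}, invoking \citep[Theorem 4.2.5]{ethier86markov} to conclude, which is exactly the Ethier--Kurtz machinery you unpack by hand via Skorokhod representation and the uniformity in the starting point.
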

\begin{proof}
We proceed by proving the convergence of the initial distributions and the semigroups respectively  \citep[Theorem 4.2.5]{ethier86markov}.
Recall that  each $\wt {\vv X}_{i:n}\topp{q,\epsilon_n}$ is a Markov process, the law of which is determined by the semigroup  $(P\topp{q,\epsilon_n}_t)_{t\ge 0}$ and the initial distribution $\delta_{W_{i:n}\topp{q,n}}$. 
We have seen the convergence of the semigroup in~\eqref{eq:ethier} before. It remains to prove $W_{i:n}\topp{q,n}\weakto W_i$ for each fixed $i$, which is a consequence of the point process convergence of the order statistics 
\equh\label{eq:order}
\summ i1n\delta_{W_i\topp{q,n}}\weakto\sif n1\delta_{W_n}
\eque
in the space of point measures. See \citep[Chapter 3]{resnick87extreme} for more details. In particular, the weak convergence~\eqref{eq:order} is equivalent to, by~\citep[Proposition 3.21]{resnick87extreme},
 \[
n\proba\pp{W_1\topp{q,\epsilon_n}\le x} = n\int_0^xp\topp{q,\epsilon_n}(y)dy \sim n\frac{(q)_\infty^3\epsilon_n^3}\pi\int_0^x\sqrt ydy = x^{3/2} , x>0,
\]
as $n\to\infty$. 
\end{proof}
We now rewrite for each $t\in\R$, the left-hand side of~\eqref{eq:DR} as
\[
Y_n\topp {q}(t):=\min_{i=1,\dots,n}\wt X_{i,t}\topp{q,\epsilon_n} = \min_{i=1,\dots,n}\wt X_{i:n,t}\topp{q,\epsilon_n},
\]
and compare to the limiting process $\eta$ in~\eqref{eq:eta}. 
By Lemma~\ref{lem:each_path}, we see formally the convergence of $Y_n\topp q$ to $\eta$ in a term-by-term manner. 
It takes some effort to make the argument rigorous.
\begin{proof}[Proof of Theorem~\ref{thm:BR}]
We proceed by an approximation argument. For all $\kappa>0$, consider
\[
\eta_\kappa^{\le}(t):=\inf_{n\in\N:W_n\leq\kappa}{\Z_{n,t}^{W_n}},\quad \eta_\kappa^{>}(t):=\inf_{n\in\N:W_n>\kappa}{\Z_{n,t}^{W_n}},
\]
and similarly
\[
Y_{\kappa,n}^{(q),\le}(t):=\min_{\substack{i=1,\dots,n\\W_i\topp {q,n}\leq \kappa}}\wt X_{i,t}\topp{q,\epsilon_n},\quad Y_{\kappa,n}^{(q),>}(t):=\min_{\substack{i=1,\dots,n\\W_i\topp {q,n}>\kappa}}\wt X_{i,t}\topp{q,\epsilon_n} .
\]
So
\[
Y_n\topp q(t)=\min\ccbb{Y_{\kappa,n}^{(q),\le}(t),Y_{\kappa,n}^{(q),>}(t)}
\qmand\eta(t) = \min\ccbb{\eta_\kappa^\le(t),\eta_\kappa^>(t)},t\in\R.
\]
The desired limit theorem now follows from the following three statements:
\equh\label{eq:BR2}
\ccbb{Y_{\kappa,n}^{(q),\le}(t)}_{t\in[0,L]}\weakto \ccbb{\eta_\kappa^{\le}(t)}_{t\in[0,L]} 
\quad\mmas n\to\infty \mfa \kappa>0,
\eque
\equh\label{eq:BR3}
\lim_{\kappa\to\infty}\limsupn\proba\pp{Y\topp q_{n}(t)\neq Y^{(q),\le}_{\kappa,n}(t) \mbox{ for some } t\in[0,L]} = 0,
\eque
and\equh\label{eq:BR1}
\lim_{\kappa\to\infty}\proba\pp{\eta(t)\neq \eta_\kappa^{\le}(t) \mbox{ for some } t\in[0,L]} = 0.
\eque

For~\eqref{eq:BR2}, by construction, $Y_{\kappa,n}^{(q),\le}$ is the infimum of a random finite number of trajectories in $D([0,L])$, denoted by
\[
N_{\kappa,n}\topp q:=\summ i1n\indd{W_i\topp{q,n}\le \kappa}. 
\]
By the convergence of order statistics established in~\eqref{eq:order}, we know that 
\[
N_{\kappa,n}\topp q\weakto \sif i1\indd{W_i\leq \kappa}=:N_\kappa,
\]
which is also finite almost surely.
Therefore, for all $\delta,\kappa>0$ there exists $m_{\kappa,\delta}\in\N$ such that
\equh\label{eq:m}
\limn \proba\pp{N\topp q_{\kappa,n}>m_{\kappa,\delta}} = \proba\pp{N_\kappa>m_{\kappa,\delta}}<\delta.
\eque
Now, consider
\[
Y_{\kappa,\delta,n}^{(q),\le}(t):=\min_{\substack{i=1,\dots,m_{k,\delta}\\W_{i:n}\topp{q,n}\le \kappa}}\wt X_{i:n,t}\topp{q,\epsilon_n}\qmand
\eta^\le_{\kappa,\delta}(t) := \min_{\substack{i=1,\dots,m_{\kappa,\delta}\\ W_i\leq\kappa}}\Z_{i,t}^{W_i}, t\in[0,L].
\]
We show
\equh\label{eq:Ykd}
\ccbb{Y_{\kappa,\delta,n}^{(q),\le}(t)}_{t\in[0,L]}\weakto\ccbb{\eta^\le_{\kappa,\delta}(t)}_{t\in[0,L]}
\eque
by the continuous mapping theorem. For this purpose, write $m = m_{\kappa,\delta}$ and introduce 
\[
f(x_1,\dots,x_m)(t):=\min_{\substack{i=1,\dots,m\\x_i(0)\leq\kappa}}x_i(t), \mfa x_i\in D([0,L]), i=1,\dots,m.
\]
Then one can express
\[
Y_{\kappa,\delta,n}^{(q),\le}(t) = f\pp{\wt X_{1:n}\topp{q,\epsilon_n},\dots,\wt X_{m:n}\topp{q,\epsilon_n}}(t)\qmand \eta_\kappa^\le(t) = f\pp{\Z_1^{W_1},\dots,\Z_{m}^{W_m}}(t),
\]
each as the same functional $f$ on $m$ processes in $D([0,L])$. The convergence of each argument above is established in Lemma~\ref{lem:each_path}. Therefore, it remains to show that $f$ is continuous with probability one. More precisely,
 let 
\[
\calD := \{\vv x\equiv (x_1,\dots, x_m)\in (D([0,L]))^m: \mbox{ $f$ is not continous at $\vv x$}\}
\] denote the discontinuity points of $f$ on $(D([0,L]))^m$. 
Observe that 
\begin{multline}\label{eq:discontinuous}
\ccbb{(Z_1^{W_1},\dots,Z_m^{W_m})\in\calD} 
\subset \ccbb{\exists i\in\{1,\dots,m\}, W_i = \kappa}\\
 \cup{
 \ccbb{\exists t\in(0,L), i\neq j\in\{1,\dots,m\} \mbox{ s.t.~$\Z_i^{W_i}$ and $\Z_j^{W_j}$ both have jumps at $t$}}}.
\end{multline}
The first event on the right-hand side above has zero probability. For the second, consider the discontinuity points, equivalently the jumps, of $\Z_i^{W_i}$ over $[0,L]$. Let $\tau\topp i_1$ denote the position of the largest jump in absolute value, $\tau\topp i_2$ the position of the second largest, and so on. 
Now, for each pair $i\neq j$, consider
\[
B_{i,j}:= \bigcup_{n=1}^\infty\ccbb{\Z_j^{W_j} \mbox{ has a discontinuity point at $\tau\topp i_n$}}.
\]
Recall the fact that Feller processes do not have fixed discontinuity points. This, in addition to the conditional independence of the two Markov processes given $W_i$ and $W_j$, leads to
\[\proba\pp{\Z_j^{W_j} \mbox{ has a discontinuity point at $\tau_n\topp i$}} = 0,
\]
whence $\proba(B_{i,j}) = 0$. We have shown  that the second event on the right-hand side of~\eqref{eq:discontinuous} has probability zero. We have thus shown~\eqref{eq:Ykd}.

Now, let $d$ denote the Skorohod metric on $D([0,L])$. 
Observe that on the event $\{\max\{N_{\kappa,n}\topp q,N_\kappa\}\le m_\kappa\}$, $Y_{\kappa,\delta,n}^{(q),\le} = Y\topp q_{\kappa,n}$ and $\eta_{\kappa,\delta}^\le = \eta_\kappa^\le$. Therefore, 
for all $\gamma>0$ we have
\begin{multline*}
\proba\pp{d\pp{Y_{\kappa,n}^{(q),\le},\eta_\kappa^\le}>\gamma}
 \leq \proba\pp{d\pp{Y_{\kappa,n}^{(q),\le},Y_{\kappa,\delta,n}^{(q),\le}}>\gamma/3}  \\+ \proba\pp{d\pp{Y_{\kappa,\delta,n}^{(q),\le},\eta_{\kappa,\delta}^\le}>\gamma/3} + \proba\pp{d\pp{\eta_{\kappa,\delta}^\le,\eta_\kappa^\le}>\gamma/3} \\
 \leq \proba\pp{N_{\kappa,n}\topp q>m_{\kappa,\delta}} + \proba\pp{N_\kappa>m_{\kappa,\delta}} + \proba\pp{d\pp{Y_{\kappa,\delta,n}^{(q),\le},\eta_{\kappa,\delta}^\le}>\gamma/3}.
\end{multline*}
It follows from~\eqref{eq:m} and~\eqref{eq:Ykd} that
 $\limsupn \proba(d(Y_{\kappa,n}^{(q),\le},\eta_\kappa^\le)>\gamma)<2\delta$, for all $\gamma,\delta>0$. This implies~\eqref{eq:BR2}. 

Next we prove~\eqref{eq:BR3}. For this, it suffices to show
\[
\lim_{\kappa\to\infty}\limsupn\proba\pp{\sup_{t\in[0,L]}\wt X_{1:n,t}^{(q),\epsilon_n}<\inf_{t\in[0,L]}Y_{\kappa,n}^{(q),>}(t)} = 0.
\]
Again, by~\eqref{eq:each_path}, $\{\wt{\vv  X}_{1:n}^{(q,\epsilon_n)}\}_{n\in\N}$ is a tight sequence in $D([0,L])$. So for all $\delta>0$, there exists $b_\delta$ such that
\[
\proba\pp{\sup_{t\in[0,L]}\wt X_{1:n,t}\topp{q,\epsilon_n}>b_\delta}<\delta. 
\]
We now show
\equh\label{eq:BR5}
\lim_{\kappa\to\infty}\limsupn\proba\pp{\inf_{t\in[0,L]}Y_{\kappa,n}^{(q),>}(t)<b_\delta} = 0. 
\eque
Indeed,
\begin{multline*}
\proba\pp{\inf_{t\in[0,L]}Y_{\kappa,n}^{(q),>}(t)<b_\delta} \leq \summ i1n \proba\pp{\inf_{t\in[0,L]}\wt X_{i,t}\topp{q,\epsilon_n}<b_\delta, \wt X_{i,0}\topp{q,\epsilon_n}>\kappa}\\
= n \proba\pp{\inf_{t\in[0,L]}\wt X_t\topp{q,\epsilon_n}<b_\delta, \wt X_0\topp{q,\epsilon_n}>\kappa} \\
= n\int_\kappa^\infty p\topp{q,\epsilon_n}(w)\proba\pp{\inf_{t\in[0,L]}\wt X_t\topp{q,\epsilon_n}<b_\delta\mmid \wt X_0\topp {q,\epsilon_n}=w}dw.
\end{multline*}
As we have seen in Lemma~\ref{lem:doublesum1} before, as $n\to\infty$ the last term above is asymptotically equivalent to 
\[
\frac32\int_\kappa^\infty \sqrt w\cdot \proba\pp{\inf_{t\in[0,L]}\Z_t^w<b_\delta}dw,
\]
which is finite for $\kappa = 0$. This  implies~\eqref{eq:BR5} and hence~\eqref{eq:BR3}.

It remains to show~\eqref{eq:BR1}. This follows from the  pointwise convergence of $\eta_\kappa^\le$ to $\eta$ over any finite interval, thanks to the presentation~\eqref{eq:finite} before. 
\end{proof}

\subsection{Asymptotic tail independence of the limit minimum process}\label{sec:tails}
As a process that arises in the investigation of extremes, the tail dependence of $\eta$ is of natural interest. Notice that the tails of our interest are as $x\downarrow 0$. For the sake of convenience we consider the transformed process
\[
\xi(t) = \eta(t)^{-3/2}, t\in\R,
\]
a semi-max-stable process with standard 1-Fr\'echet marginal distribution ($\proba(\xi(t)\le x) = \proba(\eta(t)\ge x^{-2/3}) = \exp(-x\inv)$). The bivariate tails of $\xi$ are asymptotically independent in the sense that the coefficient of residual tail dependence \citep{ledford96statistics} is 1/2. More precisely we have the following.
\begin{Lem}
 We have for all $s<t$, 
\[
\proba(\xi(s)>x, \xi(t)>x) \sim \frac4{3\pi}\frac1{(t-s)^3}\proba(\xi(s)>x)^2
\]
as $x\to\infty$. 
\end{Lem}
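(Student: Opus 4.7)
The plan is to translate the claim into an asymptotic for the infimum process $\eta$, use the exponential formula for its bivariate survival function coming from the Poisson representation, and reduce everything to a large-time estimate of $\proba(\Z_T^v\le 1)$ that can be read off directly from the explicit transition density~\eqref{eq:Biane1/2}.

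First, I would set $y=x^{-2/3}$, so that $\{\xi(u)>x\}=\{\eta(u)<y\}$ and $\proba(\xi(u)>x)=1-e^{-1/x}\sim y^{3/2}$, and apply the joint distribution formula for $\eta$ from Section~\ref{sec:BR} with $m=2$ and $x_1=x_2=y$, together with the identity $\int_0^\infty \proba(\Z_u^w\le y)\frac{3}{2}w^{1/2}dw=y^{3/2}$ already used in the proof of the marginal law of $\eta$, to obtain
\[
\proba(\eta(s)>y,\eta(t)>y)=\exp\{-2y^{3/2}+J(s,t,y)\}, \qquad J(s,t,y):=\frac{3}{2}\int_0^\infty w^{1/2}\proba(\Z_s^w\le y,\Z_t^w\le y)\,dw.
\]
Inclusion--exclusion gives $\proba(\eta(s)<y,\eta(t)<y)=1-2e^{-y^{3/2}}+e^{-2y^{3/2}+J}$, and a small-$y$ Taylor expansion followed by the substitution $x=y^{-3/2}$ (noting $\proba(\xi(s)>x)^2\sim y^3$) reduces the lemma to the asymptotic $J(s,t,y)\sim 4y^3/(3\pi(t-s)^3)$ as $y\downarrow 0$.

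Second, I would simplify $J$: by the Markov property, $\proba(\Z_s^w\le y,\Z_t^w\le y)=\int_0^y p_{0,s}(w,u)\proba(\Z_{t-s}^u\le y)\,du$; then Fubini and the invariance $\int_0^\infty w^{1/2}p_{0,s}(w,u)dw=u^{1/2}$ (the measure $w^{1/2}dw$ is stationary for $\Z$, as noted in Section~\ref{sec:markov}) produce
\[
J(s,t,y)=\frac{3}{2}\int_0^y u^{1/2}\proba(\Z_{t-s}^u\le y)\,du.
\]
Next, the self-similarity~\eqref{eq:SS} applied with $\lambda=\sqrt y$ gives $\proba(\Z_{t-s}^u\le y)=\proba(\Z_T^v\le 1)$ where $T:=(t-s)/\sqrt y$ and $v:=u/y$, and the change of variable $v=u/y$ rewrites
\[
J(s,t,y)=\frac{3}{2}\,y^{3/2}\int_0^1 v^{1/2}\proba(\Z_T^v\le 1)\,dv,
\]
so the small-$y$ behavior of $J$ becomes the large-$T$ behavior of $\proba(\Z_T^v\le 1)$ uniformly in $v\in[0,1]$.

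Finally, from~\eqref{eq:Biane1/2}, for $v,z\in[0,1]$ the denominator of $p_{0,T}(v,z)$ equals $T^4(1+O(T^{-2}))$ uniformly, so $p_{0,T}(v,z)\sim 2\sqrt z/(\pi T^3)$, and integration against $dz$ on $[0,1]$ gives $\proba(\Z_T^v\le 1)\sim 4/(3\pi T^3)$ uniformly in $v\in[0,1]$. The uniform upper bound $\proba(\Z_T^v\le 1)\le C(T^{-3}\wedge 1)$, available either from the same density estimate or from the Khoshnevisan--Xiao inequality of Section~\ref{sec:EVT}, justifies dominated convergence in $v$, yielding $J(s,t,y)\sim 4y^3/(3\pi(t-s)^3)$ as required. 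The main obstacles are the uniform-in-$v$ control of the large-$T$ asymptotic of $\proba(\Z_T^v\le 1)$, which rests on the explicit density bound, and the careful Taylor bookkeeping in $1-2e^{-y^{3/2}}+e^{-2y^{3/2}+J}$ needed to pick out the leading $y^3$-contribution from $J$.
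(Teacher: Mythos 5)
Your route is essentially the paper's: both reduce the bivariate tail of $\xi$ to the exponential formula for $\proba(\eta(s)>y,\eta(t)>y)$, peel off the diagonal using $\Z_0^w=w$ (you do this via the Markov property at time $s$ plus the invariance of $w^{1/2}dw$, the paper by setting $s=0$ via stationarity --- the same computation), rescale by self-similarity with $\lambda=\sqrt y$, and extract the large-$T$ asymptotic $\proba(\Z_T^v\le 1)\sim 4/(3\pi T^3)$ uniformly on $v\in[0,1]$ from the explicit density \eqref{eq:Biane1/2}. Your value $J(s,t,y)\sim 4y^3/(3\pi (t-s)^3)$ agrees exactly with the paper's $\int_0^1\proba(\Z^w_{tx^{1/3}}\le 1)\tfrac32 w^{1/2}dw\sim 4/(3\pi t^3 x)$.

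The one step you should not wave at is the final Taylor bookkeeping, because it does not deliver the lemma as stated. Writing $a=y^{3/2}$, one has
\[
1-2e^{-a}+e^{-2a+J}=(1-e^{-a})^2+e^{-2a}\bigl(e^{J}-1\bigr)\sim a^2+J = y^3+J,
\]
and since $\proba(\xi(s)>x)^2=(1-e^{-a})^2\sim y^3$, your (correct) asymptotic for $J$ yields
\[
\proba(\xi(s)>x,\xi(t)>x)\sim \Bigl(1+\tfrac{4}{3\pi(t-s)^3}\Bigr)\proba(\xi(s)>x)^2 ,
\]
not $\tfrac{4}{3\pi(t-s)^3}\proba(\xi(s)>x)^2$: the ``independence'' term $(1-e^{-a})^2$ is of the same order $y^3$ as $J$ and cannot be discarded. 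So your claim that the expansion ``reduces the lemma to $J\sim 4y^3/(3\pi(t-s)^3)$'' silently drops a contribution of equal order. To be fair, the paper's own proof ends at exactly the same point (``combining all the calculations'') and its displayed formulas lead to the same constant $1+4/(3\pi(t-s)^3)$, so this appears to be a slip in the stated constant of the lemma rather than a defect peculiar to your argument; but a complete write-up must carry out this expansion explicitly and state the constant it actually produces. The qualitative conclusion (residual tail dependence coefficient $1/2$, i.e.\ the joint tail is of the order of the squared marginal tail) is unaffected.
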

\begin{proof}
By stationarity, it suffices to consider $s=0$. By straightforward calculation,
\begin{align*}
\proba& (\xi(0)>x, \xi(t)>x)\\
& = 1-\proba(\eta(0)>x^{-2/3})-\proba(\eta(t)>x^{-2/3}) + \proba(\eta(0)>x^{-2/3},\eta(t)>x^{-2/3})\\
& = 1-2\exp(-x\inv)+\exp\pp{-\int_0^\infty \proba\pp{x^{2/3}\pp{\Z_0^w\wedge \Z_t^w}\le 1}\frac32 w^{1/2}dw}.
\end{align*}
Then, by self-similarity \eqref{eq:SS} of the tangent process, 
\begin{align*}
\int_0^\infty \proba\pp{x^{2/3}\pp{\Z_0^w\wedge \Z_t^w}\le 1}\frac32 w^{1/2}dw & =  x\inv \int_0^\infty \proba\pp{{\Z_0^w\wedge \Z_{tx^{1/3}}^w}\le 1}\frac32 w^{1/2}dw\\
& = x\inv \pp{1+\int_1^\infty\proba(\Z_{tx^{1/3}}^w\le 1)\frac 32w^{1/2}dw}\\
& = x\inv \pp{2-\int_0^1\proba(\Z_{tx^{1/3}}^w\le 1)\frac 32w^{1/2}dw},
\end{align*}
where in the last step we used the fact that $\Z$ has invariant distribution $w^{1/2}dw$,  $\int_0^\infty \proba(\Z_t^w\le 1)(3/2)w^{1/2}dw = \int_0^\infty \proba(\Z_0^w\le 1)(3/2) w^{1/2}dw = 1$. Then,
\begin{align*}
\int_0^1 & \proba(\Z_{tx^{1/3}}^w\le 1)\frac 32w^{1/2}dw \\
& = \int_0^1\int_0^1 \frac{2tx^{1/3}\sqrt y}{\pi[(y-w)^2+2(y+w)(tx^{1/3})^2 + (tx^{1/3})^4]}dy\frac32 w^{1/2}dw\\
& \sim \frac1{(tx^{1/3})^{3}}\int_0^1\int_0^1\frac{2\sqrt y}\pi dy \frac32w^{1/2}dw = \frac 4{3\pi t^3}\frac1x,
\end{align*}
as $x\to\infty$. Combining all the calculations, we have proved the lemma.
\end{proof}
\begin{Rem}The fact that the limit minimum process has asymptotically independent tails (near 0), intuitively, suggests that when the tangent process $\Z$ gets very close to the boundary at some time point, it drifts away within very short of time. This reflects actually similar behavior of the original $q$-Ornstein--Uhlenbeck process $X\topp q$ near the boundary ($b_q^-$ or $b_q^+$). 
\end{Rem}

\subsection*{Acknowledgements}The author thanks Wlodek Bryc, Sebastian Engelke, Zakhar Kabluchko, Stilian Stoev and Yimin Xiao for stimulating and inspiring discussions on the topic. The author thanks an anonymous referee for very careful reading of the manuscript and several suggestions, including in particular considering the residual tail dependence (Section \ref{sec:tails}). YW's research was partially supported by NSA grant  H98230-14-1-0318.
\bibliographystyle{apalike}
\bibliography{../include/references}

\end{document}